\theoremstyle{definition}
\newtheorem{thm}{Theorem}[section]
\newtheorem{lem}[thm]{Lemma}
\newtheorem{prop}[thm]{Proposition}
\newtheorem{rmk}[thm]{Remark}
\numberwithin{equation}{section}
\crefname{thm}{theorem}{theorems}
\crefname{cor}{corollary}{corollaries}
\crefname{lem}{lemma}{lemmata}
\crefname{prp}{proposition}{propositions}
\newcommand{\N}{\mathbb{N}}
\newcommand{\Q}{\mathbb{Q}}
\newcommand{\Z}{\mathbb{Z}}
\newcommand{\mc}{\mathcal}
\newcommand{\mf}{\mathfrak}
\newcommand{\mbb}{\mathbb}
\newcommand{\nat}{\mbb N}
\newcommand{\cc}{\mathbb C}
\newcommand{\rr}{\mathbb R}
\newcommand{\qq}{\mbb Q}
\newcommand{\zz}{\mbb Z}
\newcommand{\cb}[1]{\left\{{#1}\right\}}
\newcommand{\rb}[1]{\left({#1}\right)}
\newcommand{\disc}{\operatorname{disc}}
\newcommand{\Tr}{\operatorname{Tr}}
\title{Pythagoras numbers for infinite algebraic fields}
\author{Nicolas Daans}
\author{Stevan Gajović}
\author{Siu Hang Man}
\author{Pavlo Yatsyna}
\address[N.~D.]{KU Leuven, Faculty of Science, Department of Mathematics, Celestijnenlaan 200B, 3001 Heverlee, Belgium}
\address[S.~G.]{MPIM Bonn, Vivatsgasse 7, 53111 Bonn, Germany}
\address[N.~D., S.~G., S.~H.~M., P.~Y.]{Charles University, Faculty of Mathematics and Physics, Department of Algebra, Sokolov\-ská~83, 186~75 Praha~8, Czech Republic}
\email[N.~Daans]{nicolas.daans@kuleuven.be}
\email[S.~Gajović]{stevangajovic@gmail.com}
\email[S.~H.~Man]{shman@karlin.mff.cuni.cz}
\email[P.~Yatsyna]{p.yatsyna@matfyz.cuni.cz}
\date{}
\begin{document}
\begin{abstract}
    We prove that the Pythagoras number of the ring of integers of the compositum of all real quadratic fields is infinite. The same holds for certain infinite totally real cyclotomic fields. In contrast, we construct infinite degree totally real algebraic fields whose rings of integers have finite Pythagoras numbers, namely, one, two, three, and at least four.
\end{abstract}
\maketitle

\section{Introduction}

Every natural number is a sum of four squares of integers, and consequently, every positive rational number is a sum of four squares of rational numbers.
This is known as Lagrange's four-square theorem.
When considering a number field $K$ (or, more generally, an algebraic field extension $K/\qq$), one may call an element $x \in K$ \emph{totally positive} if its image under every embedding $K \to \rr$ is positive.
Clearly, any sum of squares must be totally positive, and Siegel proved that, conversely, every totally positive element in such a field $K$ is a sum of four squares of elements of $K$ \cite{Siegel21}.
Infamously, the analogous question for the ring of integers $\mc{O}_K$ is significantly more subtle.

For one, for a number field $K$, in general not every totally positive element of $\mc{O}_K$ is a sum of squares of elements of $\mc{O}_K$. This has led to the study of other ways to represent totally positive elements of $\mc{O}_K$ by a positive-definite quadratic form, i.e.~the search for \emph{universal quadratic forms}.
See, e.g.~\cite{Earnest,Kala,Kim}.
The present paper is concerned with the complementary question: if we restrict our attention to those elements of $\mc{O}_K$ that can be represented as a sum of squares, \textit{how many squares} do we need?
Formally: for a commutative ring $R$ and a natural number $n$, we write ${\displaystyle \Sigma^n R^2} = \lbrace x_1^2 + \cdots + x_n^2 \mid x_1, \ldots, x_n \in R \rbrace$, and one defines the \emph{Pythagoras number of $R$} as
\[P(R) = \inf \left\lbrace n \in \nat \;\middle|\; \Sigma^n R^2 = \Sigma^{n+1} R^2 \right\rbrace \in \nat \cup \lbrace \infty \rbrace.\]
By the above discussion, we have $P(\zz) = P(\qq) = 4$ (after observing that e.g.~$7$ is not a sum of three squares in $\zz$ or $\qq$), and in general $P(K) \leq 4$ for an algebraic extension $K/\qq$ (see~\cite{Leep} for a more extensive discussion on Pythagoras numbers of fields).
When $R$ is a domain with fraction field $K$, then one easily sees that $P(R) \geq P(K)$, but the values may differ arbitrarily.
For example, $P(\qq(X)) = 5$ \cite{Pourchet}, whereas $P(\zz[X]) = \infty$ \cite{CDLR}.

Precise Pythagoras numbers of rings of integers have been computed for some number fields of small degree, see e.g.~\cite{Pet73,Krasensky,KRS,HeHu22,Tinkova}.
For any number field $K$ one has $P(\mc{O}_K) \geq 3$, and this bound is attained, e.g.~for $K = \qq(\sqrt{2})$.
For number fields $K$ (and consequently for algebraic extensions $K/\qq$) whose normal closure cannot be embedded into $\rr$, one also has an upper bound $P(\mc{O}_K) \leq 4$ (by the local-global principle for spinor genera and the fact that all valuation rings in local fields have Pythagoras number at most $4$, see \cite{Hsia-Representation-indefinite}).
Algebraic extensions $K/\qq$ whose normal closure can be embedded into $\rr$ are called \emph{totally real}.
For these fields, it is only known that $P(\mc{O}_K)$ can be bounded as a function of the degree $[K : \qq]$ when the latter is finite, see \cite{KY}; the argument relies on studying sums of squares of integral linear forms and bounds on the integral Mordell's function, see \cite{KO,BCIL}.
We do not know, for totally real number fields $K$, whether $P(\mc{O}_K)$ \textit{must} grow with the degree $[K : \qq]$, in fact, we know almost nothing about the expected behaviour of $P(\mc{O}_K)$ as $[K : \qq]$ grows, see \Cref{sect:Open} for several open problems.

In this paper, we take a first step towards understanding this question by considering the limit cases, i.e.~looking at totally real fields of infinite degree. For simplicity, when we speak of an \emph{algebraic field} we mean a field $K$ which is an algebraic extension of $\qq$, and when we speak of an \emph{infinite algebraic field} we mean an algebraic field of infinite degree over $\qq$. In \cite{Scharlau}, an infinite chain of number fields $(K^i)_{i \in \nat}$ was constructed with unbounded $P(\mc{O}_{K^i})$.
Inspection of the proof reveals that some well-chosen subset $S \subseteq \zz$ is constructed for which the field $K = \qq(\sqrt{n} \mid n \in S )$ satisfies $P(\mc{O}_K) = \infty$.
We show that the infinitude of the Pythagoras number holds for many choices of $S$, and conjecture that it holds for any infinite set $S$ which produces an infinite algebraic field $\qq(\sqrt{n} \mid n \in S)$:

\begin{thm}\label{thm:multiquad}
Let $S \subseteq \nat$ contain infinitely many prime numbers $p \equiv 1 \pmod 4$, set $K = \qq( \sqrt{n} \mid n \in S)$.
Then
    $P(\mc O_{K})=\infty$.
\end{thm}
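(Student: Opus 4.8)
The plan is to establish $P(\mc O_K)=\infty$ by producing, for every $N\in\nat$, a totally positive element $x_N\in\mc O_K$ that is a sum of squares but not a sum of $N$ squares; by the definition of the Pythagoras number this is exactly what is needed. Two observations organize the argument. First, $K$ is the directed union of the finite multiquadratic subfields $L=\qq(\sqrt n\mid n\in T)$ with $T\subseteq S$ finite, so any identity $x=\sum_{i=1}^N y_i^2$ takes place already in some $\mc O_L$; the obstruction must therefore be robust enough to survive passage to arbitrarily large such $L$. Second, the hypothesis that the relevant primes satisfy $p\equiv 1\pmod 4$ (equivalently $p\equiv 1$ or $5\pmod 8$) is what controls the behaviour at $2$: each $\sqrt p$ lies in $\qq_2$ or generates the unramified quadratic extension of $\qq_2$, so $2$ is unramified in $K$ and every completion of $K$ above $2$ sits inside the fixed local field $\qq_2(\sqrt 5)$. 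The same congruence makes $\omega_p=\tfrac{1+\sqrt p}{2}$ an algebraic integer, which I expect to use as a building block.

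The crux is that no single local condition can force the number of squares past a fixed bound: at every finite place the pertinent local ring has Pythagoras number at most $4$, the dyadic completions being confined to $\qq_2(\sqrt5)$, while at the (infinitely many) real places total positivity is the only constraint. Hence for $N\ge 5$ every totally positive element of $\mc O_K$ is represented by $N$ squares \emph{everywhere locally}, and failure of a global representation is a genuinely global, definite-form phenomenon: the positive-definite diagonal form $\langle 1,\dots,1\rangle$ with $N$ entries lies in the genus that represents $x_N$, but its class does not. My plan is therefore to manufacture, for each $N$, a totally positive $x_N$ whose genus-versus-class obstruction to being a sum of $N$ squares grows, by coupling together many primes $p_1,\dots,p_r\equiv 1\pmod 4$ drawn from $S$ and combining the associated $\omega_{p_j}$; equivalently, via the dictionary between $P(\mc O_K)$ and sums of squares of integral linear forms, to exhibit integral quadratic data over $\mc O_L$ whose representation as a sum of squares of $\mc O_L$-linear forms provably requires a number of forms tending to infinity with $r$.

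The main obstacle is precisely the stability demanded by the first observation. I need the lower bound that \emph{$x_N$ is not a sum of $N$ squares} to persist in \emph{every} finite extension $L'\supseteq \qq(\sqrt{p_1},\dots,\sqrt{p_r})$, not merely in the minimal field containing $x_N$. The difficulty is concrete: adjoining a new $\sqrt q$ replaces the constraint ``sum of $N$ squares'' by representability over $\mc O_L$ of $x_N$ by the richer form $N\langle 1\rangle\perp N\langle q\rangle$, and since $\sqrt q\in\qq_2(\sqrt5)$ this twist is invisible to the dyadic analysis and can genuinely lower the number of squares needed. Overcoming this — for instance by a trace or Galois-descent argument bounding how far a quadratic enlargement can decrease the representation length, or by isolating an invariant of $x_N$ insensitive to such square-class twists at the unramified primes above $2$ — is where I expect the real work to lie; the preparatory reductions above are comparatively routine.
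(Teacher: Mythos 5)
You have written a plan, not a proof, and the plan stalls exactly where you say it does: the decisive step --- making the lower bound ``$x_N$ is not a sum of $N$ squares'' survive in \emph{every} finite subextension of $K$ --- is deferred, and the mechanism you propose for it is unlikely to work. Genus-versus-class obstructions to representation by the lattice $\langle 1,\ldots,1\rangle$ are notoriously unstable under base change: the very twists you describe (representability by $N\langle 1\rangle \perp N\langle q\rangle$ after adjoining $\sqrt q$) can and in general do kill a class obstruction while preserving the genus condition, and you exhibit neither a construction of $x_N$ nor any invariant controlling this. There is also a factual slip in your reductions: the theorem only requires $S$ to \emph{contain} infinitely many primes $p\equiv 1\pmod 4$, so $S$ may also contain $2$, $3$, etc.; then $2$ ramifies in $K$ and the completions of $K$ above $2$ need not lie in $\qq_2(\sqrt 5)$. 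Indeed the paper proves non-representability over the ring of integers of the full compositum $\Q^{(2),+}$ of all real quadratic fields (so that any representation in $\mc O_K$ is automatically covered), and there your dyadic confinement certainly fails.

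The extension-insensitive invariant you hope for exists, but it is not local or class-theoretic: it is the normalized trace $\frac{1}{[L:\Q]}\Tr_{L/\Q}$, which does not depend on the number field of definition. The paper's argument takes $s_n=\sum_{i=1}^n\bigl(\frac{1+\sqrt{p_i}}{2}\bigr)^2$ with primes $p_i\equiv 1\pmod 4$ growing rapidly, namely $p_{n+1}>2+2\sum_{i=1}^n(1+p_i)$. Given $s_n=\alpha_1^2+\cdots+\alpha_m^2$ over a number field $L\subseteq\Q^{(2),+}$ containing $\sqrt{p_n}$, one writes $\mc O_L=\mc O_M\oplus \mc O_M\frac{1+\sqrt{p_n}}{2}$ for a suitable subfield $M$ (this uses that $p_n$ is prime and $\equiv 1\pmod 4$ --- the point your composite-square-class discussion would also have to confront, cf.~Remark 3.5 of the paper), so $\alpha_i=\beta_i+\gamma_i\frac{1+\sqrt{p_n}}{2}$. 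The estimate $\Tr_{L/\Q}(\alpha_i^2)\geq\frac{p_n}{4}\Tr_{L/\Q}(\gamma_i^2)$, combined with Siegel's theorem that $\Tr_{\Q(\delta)/\Q}(\delta)\geq\frac32[\Q(\delta):\Q]$ for totally positive integers $\delta\neq 1$, shows the size condition on $p_n$ forces exactly one $\gamma_i=\pm 1$ with $\beta_i=0$ and all other $\gamma_j=0$; peeling off that square and inducting gives $m\geq n$. No local-global analysis, spinor genus, or class group enters; without replacing your genus-gap heuristic by such an extension-invariant quantity, your proposal has a genuine gap at its self-acknowledged crux.
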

The above theorem applies in particular to the field $\mbb Q^{(2), +} = \qq(\sqrt{n} \mid n \in \nat)$, the compositum of all real quadratic fields.
Fields of the form $\qq(\sqrt{n} \mid n \in S)$ satisfy the Northcott property, see, e.g.~\cite{BZ}.
In the recent work \cite{DKM} it was shown that totally real fields of infinite degree and with the Northcott property do not admit universal quadratic forms, and one may speculate on a further relation with the Pythagoras number of the ring of integers.

We provide another class of totally real algebraic fields for which the ring of integers has an infinite Pythagoras number but without the Northcott property.
For $N\in\N$, let $\zeta_N = e^{2\pi i/N}$. Let $K_N= \Q(\zeta_N)$ be the \textit{$N$-th cyclotomic field}.  Pythagoras numbers of $P(K_N)$ are known in relation to the \textit{Stufe} (see, e.g.~\cite{Moser}). In light of our setup, denote by $K_N^+ = \Q(\zeta_N + \zeta_N^{-1})$ the \textit{maximal real subfield} of $K_N$. For a prime number $p$, let $K_{p^\infty} = \bigcup_{n\in\N} K_{p^n}$, and $K_{p^\infty}^+ = \bigcup_{n\in\N} K_{p^n}^+$ the maximal real subfield of $K_{p^\infty}$.
We prove the following.

 \begin{thm}\label{thm:2}
 Let $p$ be a prime number that satisfies $p\equiv 1\pmod{4}$. Then $P(\mc O_{K_{p^{\infty}}^+})=\infty$.
 \end{thm}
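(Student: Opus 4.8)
The plan is to prove \Cref{thm:2} by exhibiting, for every $N \in \nat$, a \emph{totally positive} element $\alpha \in \mc O_{K_{p^{\infty}}^+}$ which is a sum of squares but is not a sum of $N$ squares in $\mc O_{K_{p^{\infty}}^+}$; this forces $\Sigma^N (\mc O_{K_{p^{\infty}}^+})^2 \subsetneq \Sigma (\mc O_{K_{p^{\infty}}^+})^2$ for all $N$ and hence $P(\mc O_{K_{p^{\infty}}^+}) = \infty$. Since the paper pairs this result with \Cref{thm:multiquad}, my first instinct is to reuse the mechanism behind that theorem, replacing the infinite family of quadratic subfields $\qq(\sqrt{n})$ by the infinite tower $(K_{p^n}^+)_n$. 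The hypothesis $p \equiv 1 \pmod 4$ enters exactly as in \Cref{thm:multiquad}: it guarantees (via the quadratic Gauss sum) that $\sqrt{p} \in K_p^+ \subseteq K_{p^{\infty}}^+$, so the ``$p \equiv 1 \pmod 4$'' square-root that drives the multiquadratic construction is still available inside the cyclotomic tower, and one can try to propagate it up the tower.

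The substantive input must be genuinely global. As the introduction records, every valuation ring of a local field has Pythagoras number at most $4$, so no single completion of $K_{p^{\infty}}^+$ can detect that an element needs more than about four squares; moreover $K_{p^{\infty}}^+$ has only finitely many dyadic places (the decomposition group of $2$ is the closure of $\langle 2 \rangle$ in $\Gal(K_{p^{\infty}}/\qq) \cong \zz_p^\times$, of finite index), each with an infinite unramified completion of bounded Pythagoras number. Hence the obstruction cannot be read off one place: it has to be an integral, class-theoretic incompatibility across a growing configuration of places, of genus-versus-class type for the diagonal form $\langle 1, \ldots, 1\rangle$. Concretely, I would produce the configuration from the tower itself: for each $k$ there are infinitely many rational primes $\ell \equiv 1 \pmod{p^k}$, and such $\ell$ splits into many primes in $K_{p^{\infty}}^+$ because $\Gal$-Frobenius at $\ell$ then lies deep in the pro-$p$ part $1 + p^k\zz_p$ of $\zz_p^\times$. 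Using approximation (strong approximation / CRT away from the archimedean places) I would build a totally positive $\alpha$, defined already at some finite level $K_{p^n}^+$, whose prescribed behaviour at this growing set of split primes obstructs representation by $\langle 1, \ldots, 1 \rangle$ of small rank, so that the minimal number of squares needed grows with the size of the configuration, hence with $N$.

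The hard part will be twofold. First, turning ``a large configuration of prescribed local conditions'' into a genuine \emph{lower} bound on the Pythagoras length: local solvability at every place only yields representability by the genus of the form (Siegel's theorem gives a four-square representation in the field $K_{p^n}^+$, but says nothing integral), and one must control the spinor/class-group obstruction finely enough to see it grow, which is precisely the delicate step one expects \Cref{thm:multiquad} to carry out and which I would aim to import rather than reprove. Second, and this is the point where $K_{p^{\infty}}^+$ differs essentially from the multiquadratic case, I must ensure \emph{persistence}: a hypothetical short representation $\alpha = \sum_{i=1}^{N} x_i^2$ with $x_i \in \mc O_{K_{p^{\infty}}^+}$ lives in some finite $\mc O_{K_{p^m}^+}$, and I need to contradict it. For the multiquadratic fields one could bound the degree of the $x_i$ using the Northcott property together with the archimedean bound $|\sigma(x_i)| \le \sqrt{|\sigma(\alpha)|}$; but $K_{p^{\infty}}^+$ fails Northcott (the elements $\zeta_{p^m} + \zeta_{p^m}^{-1}$ are integers of bounded house and unbounded degree), so that route is closed.

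To get around this I would not try to bound $m$ at all, but instead choose the obstruction so that it is intrinsic to $\mc O_{K_{p^{\infty}}^+}$: since there are only finitely many dyadic places and a single prime $\mf p$ above $p$, and since the split primes $\ell$ and their decomposition data are determined inside the fixed field $K_{p^{\infty}}^+$, the relevant local conditions are the same whether one works in $\mc O_{K_{p^m}^+}$ or in $\mc O_{K_{p^{\infty}}^+}$. The goal is then to phrase the lower bound as a statement about the Hasse--Witt-type invariants of $\langle 1, \ldots, 1, -\alpha\rangle$ over the (few, explicitly identified) completions of $K_{p^{\infty}}^+$, quantities that do not change upon enlarging the ambient field, so that any short representation, at whatever finite level it occurs, already violates the invariant computed in $K_{p^{\infty}}^+$. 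I expect this persistence step — reconciling a degree-insensitive, finitely-supported local invariant with an obstruction that nevertheless grows without bound — to be the main obstacle, and the place where the cyclotomic structure (the single totally ramified prime $\mf p$, the finitely many dyadic places with infinite residue fields, and the growth of $\Tr_{K_{p^n}^+/\qq}$) must be used in an essential way.
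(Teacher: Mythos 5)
You have not given a proof: the two steps you yourself flag as ``the hard part'' are exactly the content of the theorem, and the mechanism you propose for them cannot work even in principle. Your own observation that every completion has Pythagoras number at most $4$ defeats your plan: Hasse--Witt-type invariants of $\left\langle 1,\ldots,1,-\alpha\right\rangle$ over the finitely many relevant completions become trivial once the rank exceeds a small absolute bound, so any ``degree-insensitive, finitely-supported local invariant'' can obstruct only a bounded number of squares, never an obstruction growing with $N$. Genus, spinor or class-group refinements do not repair this, because they are not stable up the tower: an element that is not a sum of $N$ squares in $\mc O_{K_{p^m}^+}$ may become one in $\mc O_{K_{p^n}^+}$ for $n>m$ (new integers appear), so a correct proof must verify the lower bound uniformly at \emph{every} finite level $n$, and you offer no mechanism for a growing lower bound at even one level. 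The multiquadratic engine you hope to import also does not transfer in your form: the proof of \Cref{thm:multiquad} rests on \Cref{lem:basis-sqrt-p}(1), which supplies a complement subfield $M$ in which $p$ is unramified with $\{1,\frac{1+\sqrt{p}}{2}\}$ an $\mc O_M$-basis; in $K_{p^n}^+$ no such $M$ exists, since $p$ is the only ramified prime and hence (by Minkowski) every subfield other than $\Q$ is ramified at $p$. Relatedly, your claim that $p\equiv 1\pmod 4$ enters via $\sqrt{p}\in K_p^+$ is not how the hypothesis is used: in the paper it only keeps indices such as $\frac{p-1}{4}p^{n-1}$ integral in the bookkeeping, and the concluding remark notes the same argument covers $p=2$ and $p\equiv 3\pmod 4$.

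The paper's actual proof (\Cref{lem:cyclotomic_many_squares}) is elementary and integral rather than local. It exhibits $t_m=\sum_{k=3}^m(\zeta_{p^k}+\zeta_{p^k}^{-1})^2$, a sum of $m-2$ squares, and for every $n\ge m$ expands a hypothetical representation $t_m=\alpha_1^2+\cdots+\alpha_l^2$ in the explicit basis of $\mc O_{K_{p^n}^+}=\Z[\omega(1)]$, where $\omega(k)=\zeta_{p^n}^k+\zeta_{p^n}^{-k}$. The constant coefficient of any square splits into nonnegative blocks $A_k(\alpha^2)$ as in \eqref{eq:ctb}; one computes $C_0(t_m)=2(m-2)$, while the odd coefficients $C_{2p^k}(t_m)=1$ force, via a mod $2$ parity argument, some $\alpha_{j(k)}$ with $A_{p^k}(\alpha_{j(k)}^2)\ge 2$ for each of the $m-2$ indices $k$; the budget $2(m-2)$ then pins these down exactly, and a cross-term disjointness argument shows $k\mapsto j(k)$ is injective, so $l\ge m-2$ at every level $n$, hence in $\mc O_{K_{p^\infty}^+}$. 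The growing obstruction is thus an exact coefficient count (essentially a normalized trace, in the same spirit as the multiquadratic proof but adapted to the cyclotomic integral basis), carried out uniformly in $n$ --- precisely the uniform-in-level control your persistence step requires and your local-invariant framework cannot supply.
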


For completeness, in the next section of this paper we include some constructions of totally real fields $K$ of infinite degree over $\qq$ and for which $P(\mc{O}_K)$ is finite.
Except for the case $P(K) = P(\mc{O}_K) = 1$ (i.e.~the field $K$ is \emph{pythagorean}), the constructions are surprisingly subtle.

\begin{thm}[see \Cref{prop:finite-PN}]
    One, two, and three appear as Pythagoras numbers of the rings of integers of infinite totally real algebraic fields.
\end{thm}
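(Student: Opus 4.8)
The plan is to handle the three target values by three separate constructions: the case $P(\mc O_K)=1$ is immediate from an integrality observation, while $P(\mc O_K)=2$ and $P(\mc O_K)=3$ require a carefully controlled direct limit of number fields.

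For $P(\mc O_K)=1$ I would take $K$ to be the Pythagorean closure $\qq^{\mathrm{pyth}}$ of $\qq$ inside $\rr$, obtained by iterating the operation $F \mapsto F\big(\sqrt{1+x^2} \mid x \in F\big)$. This is an infinite algebraic extension, and it is totally real, since at each step one only adjoins square roots of totally positive elements $1+x^2$. By construction $P(K)=1$. The key point is that this passes to $\mc O_K$: if $a,b \in \mc O_K$ then $c := \sqrt{a^2+b^2} \in K$ satisfies the monic relation $c^2-(a^2+b^2)=0$, so $c$ is integral and hence $c \in \mc O_K$. Thus $\Sigma^2 \mc O_K^2 = \Sigma^1 \mc O_K^2$ and $P(\mc O_K)=1$.

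For $m \in \{2,3\}$ I would build $K = \bigcup_n K_n$ as an increasing union of totally real number fields engineered to give the two-sided bound $P(\mc O_K)=m$. For the \emph{lower bound} I would fix a finite place that acts as a permanent obstruction. For $m=2$, keep a place $v \mid 2$ with completion $\qq_2$, so that $2 = 1^2+1^2$ remains a sum of two squares that is not a square (as $\sqrt 2 \notin \qq_2$), giving $\Sigma^1 \mc O_K^2 \ne \Sigma^2 \mc O_K^2$. For $m=3$, fix a prime $p \equiv 3 \pmod 8$ and keep a place $v \mid p$ with completion $\qq_p$; then $p$ is a sum of three squares in $\zz$ by Legendre's theorem, but not a sum of two squares in $\qq_p$ (its valuation is odd and $\qq_p(\sqrt{-1})/\qq_p$ is unramified, so the norms of $x^2+y^2$ have even valuation), giving $\Sigma^2 \mc O_K^2 \ne \Sigma^3 \mc O_K^2$. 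Any extension that is split at $v$ leaves the completion, hence the obstruction, intact. For the \emph{upper bound} I would dovetail over all elements: whenever $\alpha \in \mc O_{K_n}$ is a sum of $m+1$ squares, pass to $K_{n+1}=K_n(\sqrt{\beta})$ for a totally positive $\beta \in \mc O_{K_n}$ with $\alpha = t_1^2 + \cdots + t_{m-1}^2 + \beta$ and $t_i \in \mc O_{K_n}$, so that $\alpha$ becomes a sum of $m$ integral squares; choosing $\beta$ to be a square in $K_{n,v}$ keeps $v$ split and the obstruction alive. Since every element and every representation occurs at a finite stage, this yields $\Sigma^{m+1}\mc O_K^2 = \Sigma^m \mc O_K^2$, hence $P(\mc O_K) \le m$, and together with the lower bound $P(\mc O_K)=m$.

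The hard part is executing the conversion step \emph{integrally}. Splitting off integral squares $t_i^2$ forces the remainder $\beta = \alpha - t_1^2 - \cdots - t_{m-1}^2$ to be totally positive, i.e. each $t_i$ must lie in the Minkowski box $\{\, |t|_\sigma < \sqrt{\alpha_\sigma} \mid \sigma \text{ real}\,\}$; for $\alpha$ of small norm this box may contain only $t_i = 0$, which forces $\beta = \alpha$ and turns $\alpha$ into a single square, over-collapsing the Pythagoras number below the target (for instance making the witness $2$ or $p$ a square and destroying the lower bound). Reconciling the three simultaneous demands, namely total positivity of $\beta$ at the archimedean places, the prescribed square class of $\beta$ at $v$, and the existence of a genuinely nontrivial integral splitting, is the crux. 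I expect the resolution to require first enlarging $K_n$ by an auxiliary extension that is split at $v$ and totally real, so as to increase the supply of integral points (and the norm available), thereby sidestepping the small-norm case before performing the split.
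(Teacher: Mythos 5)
Your case $P(\mc O_K)=1$ is correct: the Pythagorean closure of $\qq$ inside $\rr$ is totally real of infinite degree, and your integrality observation ($\sqrt{a^2+b^2}$ is a root of a monic polynomial over $\mc O_K$ and lies in $K$) is sound. It differs mildly from the paper, which takes the maximal totally real subfield of $\cc$, where every totally positive integer (not merely every sum of two squares) is a square; both work for this case.

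The case $m=2$, however, is irreparable as designed. If you preserve a place $v$ with completion exactly $\qq_2$, then $K$ embeds into $\qq_2$, and the element $7 \in \zz \subseteq \mc O_K$ is a sum of four squares in $\zz$ but is not a sum of three squares in $\qq_2$ (since $7 \equiv -1$ modulo squares in $\qq_2$ and the Stufe of $\qq_2$ is $4$, the form $\langle 1,1,1\rangle$ does not represent the class of $-1$). Hence $\Sigma^3 \mc O_K^2 \neq \Sigma^4 \mc O_K^2$ and $P(\mc O_K) \geq 4$, no matter how the tower is built: the very obstruction you install to keep $2$ from becoming a square simultaneously destroys the upper bound $P \leq 2$. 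Indeed the paper uses exactly this element $7$ to prove the \emph{lower} bound $P(\mc O_{K^+}) \geq 4$ in its $\qq_2$ case. The correct local model for $P=2$ is a place above an \emph{odd} prime: in the paper's case (2) (maximal totally real subfield of the maximal unramified extension of $\qq_p$, $p$ odd), every totally positive integer is $p^k$ times a unit square by Hensel, and $p = (\sqrt{p-1})^2 + 1^2$ is a sum of two squares but not a square.

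For $m=3$ your local witness ($p \equiv 3 \pmod 8$, odd valuation, unramified $\qq_p(i)$) is fine for the lower bound, but the upper bound is where your proof stops being a proof: you explicitly leave open the crux, namely how to perform the splitting $\alpha = t_1^2 + \cdots + t_{m-1}^2 + \beta$ \emph{integrally} with $\beta$ totally positive and simultaneously a square in $K_{n,v}$, and you only "expect" a resolution via auxiliary enlargements. Your concern about small-norm $\alpha$ is justified (the available $\beta$'s form a finite list, none of which need be a local square at $v$), and there is no evident way to arrange this for all elements arising in the dovetailing. The paper sidesteps the entire tower construction: it takes $K^+$ to be the maximal totally real subfield of the fixed field $\qq_3$. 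That field is automatically closed under taking totally real square roots of its totally positive elements that are squares in $\qq_3$ --- if $u \in \mc O_{K^+}$ is totally positive and a square in $\qq_3$, then $\sqrt{u} \in \qq_3$ is totally real, hence lies in $\mc O_{K^+}$ --- so Hensel's lemma plus a short case analysis on $3$-adic valuations converts any sum of four squares into a sum of three at once, with no bookkeeping, no small-norm pathology, and no risk of collapsing the lower bound (witnessed by $3$, a sum of three but not two squares in $\qq_3$, and by the totally positive non-sum-of-squares unit $8+3\sqrt{7}$). This "saturation" of the maximal totally real subfield is precisely the missing idea that resolves the crux you identified.
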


\section{Fields with small Pythagoras numbers} 

The construction idea is each time as follows: for a field $K$ of characteristic $0$ one can consider its \emph{maximal totally real subfield} $K^+$, by which we mean the field of all elements of $K$ which are algebraic over $\qq$ and are totally real. We will then consider $\mc{O}_{K^+}$, the ring of integers of $K^+$.  

\begin{prop}\label{prop:finite-PN}
In each of the following cases, we consider a field $K$, its maximal totally real subfield $K^+$, and the corresponding ring of integers $\mc{O}_{K^+}$.
\begin{enumerate}
\item\label{it:C} If $K = \cc$, then $P(\mc{O}_{K^+}) = 1$,
\item\label{it:Q3unr} if $p$ is an odd prime and $K$ is the maximal unramified extension of $\qq_p$, then $P(\mc{O}_{K^+}) = 2$,
\item\label{it:Q3} if $K = \qq_3$, then $P(\mc{O}_{K^+}) = 3$.
\item\label{it:Q2} if $K = \qq_2$, then $4\leq P(\mc{O}_{K^+}) \leq 6$.
\end{enumerate}
Furthermore, in cases \eqref{it:C} and \eqref{it:Q3unr}, every totally positive element of $\mc{O}_{K^+}$ is a sum of squares, whereas in cases \eqref{it:Q3} and \eqref{it:Q2}, there exist totally positive elements which are not a sum of squares.
\end{prop}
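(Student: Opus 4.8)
The plan is to handle the four cases uniformly, reading off $P(\mathcal{O}_{K^+})$ from the single place $v_0$ of $K^+$ cut out by the inclusion $K^+\subseteq K$. Case \eqref{it:C} is immediate: here $\mathcal{O}_{K^+}$ is the ring of all totally real algebraic integers, and for totally positive $x$ every conjugate of $x$ is a positive real, so a root of $T^2-x$ is again a totally real algebraic integer; thus $x$ is a single square and $\alpha^2+\beta^2=(\sqrt{\alpha^2+\beta^2})^2$, giving $P(\mathcal{O}_{K^+})=1$ with every totally positive element a square.

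For the remaining cases I would first set up the common machinery. In each case $\mathcal{O}_{K^+}$ is the ring of totally real algebraic integers lying in $K$, and writing $K^+=\bigcup_F F$ over its finite totally real subfields, an element is a sum of $n$ squares in $\mathcal{O}_{K^+}$ precisely when it is one in some $\mathcal{O}_F$; so we are free to enlarge the base field at will. A short computation identifies the completion $(K^+)_{v_0}$ with $\widehat{\mathbb{Q}_p^{\mathrm{unr}}}$, $\mathbb{Q}_3$, $\mathbb{Q}_2$ in cases \eqref{it:Q3unr}, \eqref{it:Q3}, \eqref{it:Q2} respectively (in case \eqref{it:Q3unr} the reductions of $\zeta_m+\zeta_m^{-1}$ with $(m,p)=1$ exhaust $\overline{\mathbb{F}_p}$, and $p\in K^+$ makes the value group $\mathbb{Z}$; in the other two cases $\mathbb{Q}$ is already dense). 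Since a sum of $n$ squares in $\mathcal{O}_{K^+}$ maps to one in $\mathcal{O}_{(K^+)_{v_0}}$, the rational integers $p$, $3$ and $7$ furnish the lower bounds: each is totally positive and a sum of squares in $\mathcal{O}_{K^+}$, yet $p$ has odd valuation and so is not a square in $\widehat{\mathbb{Q}_p^{\mathrm{unr}}}$, $3$ has odd valuation and so is not represented by $\langle 1,1\rangle$ over $\mathbb{Z}_3$, and $7\equiv 7\pmod 8$ is not a sum of three squares in $\mathbb{Z}_2$. Hence $P(\mathcal{O}_{K^+})\ge 2,3,4$ in the three cases.

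Case \eqref{it:Q3unr} I would then finish cleanly, which also yields the positive assertion. Because the residue field of $\widehat{\mathbb{Q}_p^{\mathrm{unr}}}$ is algebraically closed and $\mathbb{Q}_p^{\mathrm{unr}}$ is Henselian, every $v_0$-unit of $\mathcal{O}_{K^+}$ is a square there, so for totally positive $x$ with $v_0(x)$ even the totally real square root of $x$ lies in $\mathbb{Q}_p^{\mathrm{unr}}$ and $x$ is a single square. If $v_0(x)$ is odd it suffices to produce a $v_0$-unit $\alpha\in\mathcal{O}_{K^+}$ with $\alpha^2\le x$ at every real place: then $x-\alpha^2$ is a totally non-negative $v_0$-unit, hence a square, and $x=\alpha^2+(\sqrt{x-\alpha^2})^2$. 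Such an $\alpha$ can be extracted from the unit group of a large enough finite layer — the binding constraint is at the smallest conjugate of $x$, and since $|N(x)|>1$ (as $v_0(x)$ is odd) there is room, which enlarging $K^+$ only increases. This gives $P(\mathcal{O}_{K^+})=2$ with every totally positive element a sum of two squares.

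The main obstacle lies in cases \eqref{it:Q3} and \eqref{it:Q2}, where the residue field at $v_0$ is the finite field $\mathbb{F}_3$, respectively $\mathbb{F}_2$, so that a $v_0$-unit need not be a square and the clean argument above breaks down. For the upper bounds one must globalise the local representations at $v_0$ (where $P(\mathbb{Z}_3)=3$ and $P(\mathbb{Z}_2)=4$) while controlling the defect introduced by passing from local to global \emph{definite} forms, yielding $P\le 3$ and $P\le 6$; the dyadic estimate behind $P\le 6$ is the most delicate point. For the final dichotomy one must show that the definite local--global principle, whose conclusion holds trivially in cases \eqref{it:C} and \eqref{it:Q3unr}, genuinely fails here: I expect to exhibit a totally positive element that is a sum of squares in every completion yet not globally, the obstruction persisting because $K^+\subseteq\mathbb{Q}_3$ (resp.\ $\mathbb{Q}_2$) traps the residue field at $v_0$ and leaves only a thin set of square roots in $K^+$ — in contrast to case \eqref{it:Q3unr}, where $\sqrt{m}\in K^+$ for every $m$ coprime to $p$. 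Turning this thinness into a concrete non-representability statement, together with the dyadic form theory for $P\le 6$, is where the real work sits.
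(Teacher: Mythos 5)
Your case \eqref{it:C} and all three lower bounds are correct and essentially match the paper (the paper also uses $p$, $3$, and $7$, with the same local obstructions at $v_0$). But the proposal has genuine gaps in the remaining work. In case \eqref{it:Q3unr}, your treatment of odd $v_0(x)$ hinges on producing a $v_0$-unit $\alpha\in\mc{O}_{K^+}$ with $\alpha^2<x$ at \emph{every} real place; the remark that ``$|N(x)|>1$ \ldots there is room'' is a heuristic, not an argument --- simultaneously beating the smallest conjugate of $x$ (which can be arbitrarily close to $0$) while staying below $x$ at all other places is a nontrivial unit-approximation problem, and you neither prove it nor reduce it to anything standard. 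The whole difficulty is unnecessary: since $p\in K^+$, one can factor $x=p^k u$ with $u$ a totally positive $v_0$-unit in $\mc{O}_{K^+}$; Hensel's lemma makes $u$ a square (of a totally real integer, as you argue), and $p$ itself is a sum of two squares because $\sqrt{p-1}\in K^+$ (the discriminant of $\qq(\sqrt{p-1})$ is prime to $p$), giving $x\in\Sigma^2$ directly. This is the paper's route.

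For cases \eqref{it:Q3} and \eqref{it:Q2} the proposal stops at announcing the results: the upper bounds $P\le 3$ and $P\le 6$ and the existence of totally positive non-sums-of-squares are explicitly deferred (``where the real work sits''), so nothing is actually proved there. For comparison, the paper's arguments are elementary and local at $v_0$, not a local--global globalisation as you envision: to show a sum of four squares is a sum of three in case \eqref{it:Q3}, stratify the $\alpha_i$ by their $3$-adic valuations and use Hensel to show a suitable subsum ($\alpha_1^2+\alpha_4^2$, or all of $\beta$) is $\equiv 1 \pmod 3$ and hence a single square in $\mc{O}_{K^+}$; in case \eqref{it:Q2} the analogous reduction from seven squares to six rests on a pigeonhole lemma (any seven elements of $\Z/4\Z$ contain four summing to zero), which forces a subsum $\equiv 4 \pmod{32}$, a square in $\Z_2$. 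Also, your plan for the final dichotomy --- an element that is a sum of squares in every completion but not globally --- is harder than what is needed: the paper simply exhibits a totally positive \emph{unit} ($8+3\sqrt{7}$ for $\qq_3$, $23+4\sqrt{33}$ for $\qq_2$) that is not a square at $v_0$; since by the norm (H\"older/AM--GM) argument a unit cannot be a sum of two or more totally positive integers, such a unit cannot be a sum of squares at all. You would do well to adopt these finite, explicit mechanisms rather than the definite local--global machinery you sketch, which for these rings of infinite degree is not off-the-shelf.
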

\begin{proof}

\eqref{it:C}:
Let $\alpha \in \mc{O}_{K^+}$ be totally positive.
Then $\sqrt{\alpha}$ is totally real and contained in $\cc$, hence also contained in $\mc{O}_{K^+}$.
This shows that every totally positive element in $\mc{O}_{K^+}$ is a square.

\eqref{it:Q3unr}:
Let $\alpha \in \mc{O}_{K^+}$ be totally positive.
We may write $\alpha = p^k u$ for certain $k \in \Z_{\ge0}$ and $u \in K$ with $v_p(u) = 0$, where $v_p$ is the $p$-adic valuation on $K$.
Since $p \in \qq \subseteq K^+$ and $p$ is totally positive, we obtain that $u \in K^+$ and is also totally positive. Moreover, since $\alpha \in \mc{O}_{K^+}$, we have $v_{\mathfrak p}(u) = v_{\mathfrak p}(p^{-k}\alpha)\ge 0$ for every prime $\mf p$ of $\Q(\alpha)$ (where $v_{\mf p}$ denotes the $\mf p$-adic valuation on the number field $\qq(\alpha)$), hence $u \in \mc{O}_{K^+}$.
Since $v_p(u) = 0$, by Hensel's Lemma there exists $v \in K$ with $u = v^2$; since $u$ is totally positive we must have that $v \in K^+$, and as $u \in \mc{O}_{K^+}$, also $v \in \mc{O}_{K^+}$.
We conclude that, in $\mc{O}_{K^+}$, $\alpha$ is a power of $p$ times a square, and since $p$ is a sum of two squares (e.g.~because $\sqrt{p-1} \in \mc{O}_{K^+}$), we conclude that every totally positive element of $\mc{O}_{K^+}$ is a sum of two squares.

Since on the other hand $p$ is not a square in $K$ and hence also not in $\mc{O}_{K^+}$, we obtain $P(\mc{O}_{K^+}) = 2$.

\eqref{it:Q3}:
Note that $3$ is a sum of $3$ squares in $\zz$ but not a sum of $2$ squares in $\qq_3$, hence in particular not in $\mc{O}_{K^+}$.
Furthermore, the totally positive unit $8+3\sqrt{7} \in \mc{O}_{K^+}$ is not a square in $\qq_3$, hence also not in $\mc{O}_{K^+}$, but then it cannot be a sum of squares in $\mc{O}_{K^+}$ since by H{\"o}lder's inequality (applied to the norm) it is clear that a unit cannot be expressed as a sum of two or more 
totally positive integers (see, e.g. \cite[3.1]{OMeara}).

It remains to show that a sum of $4$ squares in $\mc{O}_{K^+}$ is a sum of $3$ squares.
So, consider $\alpha_1, \alpha_2, \alpha_3, \alpha_4 \in \mc{O}_{K^+}$ and $\beta = \alpha_1^2 + \alpha_2^2 + \alpha_3^2 + \alpha_4^2$.
Write $\alpha_i = 3^{k_i} u_i$ for some $k_i \in \Z_{\ge0}$ and $u_i \in \mc{O}_{K^+}$ not divisible by $3$. By dividing all the $\alpha_i$'s by a common power of $3$ and switching their order, we may assume $0 = k_1 \leq k_2 \leq k_3 \leq k_4$.
We consider two cases, and show that in each case, $\beta$ is a sum of $3$ squares.
\begin{itemize}
\item $k_4 \geq 1$: In this case, $\alpha_1^2 + \alpha_4^2 \equiv 1 \pmod 3$, and by Hensel's Lemma $\alpha_1^2 + \alpha_4^2$ is a square in $K$.
Since it is additionally a totally positive element of $\mc{O}_{K^+}$, it is in fact a square in $\mc{O}_{K^+}$, making $\beta$ a sum of $3$ squares,
\item $k_4 = 0$: in this case, also $k_2 = k_3 = 0$.
We have that $\alpha_i^2 \equiv 1 \pmod 3$ for each $i$ and thus $\beta \equiv 1 \pmod 3$; as above, we conclude by Hensel's Lemma that $\beta$ is itself a square.
\end{itemize}
\eqref{it:Q2}: Since 7 is a sum of four squares, but not a sum of three squares in $\Q_2$, we have $P(\mc{O}_{K^+}) \geq 4$.
As in \eqref{it:Q3} there is a totally positive unit $23 + 4\sqrt{33} \in \mc{O}_{K^+}$ which is not a square (since it is not a square modulo $4$), hence not a sum of squares either.
It remains to prove that the sum of any seven squares is a sum of at most six squares.

As in \eqref{it:Q3}, consider $\beta=\alpha_1^2+\cdots+\alpha_7^2$, and we may assume that $\alpha_i=2^{k_i}u_i$, for $1\leq i\leq 7$ such that $0=k_1\leq k_2\leq\cdots\leq k_7$ and $v_2(u_i)=0$. If $k_7\geq 2$, then $\alpha_1^2+\alpha_7^2\equiv 1\pmod{8}$ is a totally positive number, so it is a square in $\mc{O}_{K^+}$. If $k_6=k_7=1$, then $\alpha_1^2+\alpha_6^2+\alpha_7^2$ is a square in $\mc{O}_{K^+}$. If $k_7=1$, but $k_6=0$, then $\alpha_2^2+\cdots+\alpha_6^2+\alpha_7^2$ is a square in $\mc{O}_{K^+}$. Finally, assume $k_7=0$. Then for each $1\leq i\leq 7$, we have $\alpha_i^2=8\gamma_i+1$, for some $\gamma_i\in \mc{O}_{K}$. We apply Lemma \ref{lem:divisible-by-4} below to conclude that there are four $\gamma_i$ such that 4 divides their sum, say $\gamma_1,\ldots,\gamma_4$. Then $\alpha_1^2+\cdots+\alpha_4^2\equiv 4\pmod{32}$, hence it is a square in $\Z_2$, and since it is totally positive, it is a square in $\mc{O}_{K^+}$, which finishes the proof.
\end{proof}

\begin{lem}\label{lem:divisible-by-4}
Let $m_1,\ldots,m_7\in \Z/4\Z$. There are $1\leq i<j<k<l\leq 7$ such that $m_i+m_j+m_k+m_l=0$.    
\end{lem}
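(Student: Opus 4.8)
The plan is to recognise this as the case $n=4$ of the Erd\H{o}s--Ginzburg--Ziv theorem (among any $2n-1$ integers some $n$ of them have sum divisible by $n$), and to prove it from scratch by iterating the trivial case $n=2$, exploiting that $4 = 2\cdot 2$. The base observation is that among any three elements of $\Z/2\Z$ two must coincide, so among any three integers there is a pair whose sum is even. I would first pass to integer representatives $\tilde m_1,\dots,\tilde m_7\in\{0,1,2,3\}$ of the given $m_i$, so that vanishing in $\Z/4\Z$ becomes an ordinary congruence modulo $4$.

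Next I would extract three pairwise disjoint index-pairs each having even $\tilde m$-sum. Applying the base observation to $\tilde m_1,\tilde m_2,\tilde m_3$ yields one such pair; discarding those two indices leaves five, from any three of which I extract a second pair; discarding these leaves three indices, which furnish a third. Since $7\geq 6$, this greedy ``discard and repeat'' procedure never runs out of elements, and it produces three disjoint index-pairs whose sums I write as $2t_1,2t_2,2t_3$ with $t_1,t_2,t_3\in\Z$ (the halving is legitimate because each pair-sum is even).

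Finally I would apply the base observation once more, now to the three integers $t_1,t_2,t_3$: two of them, say $t_i$ and $t_j$, have the same parity, so $t_i+t_j$ is even and hence $2t_i+2t_j\equiv 0\pmod 4$. The four indices comprising pairs $i$ and $j$ then have $\tilde m$-sum equal to $2t_i+2t_j\equiv 0\pmod 4$; relabelling them in increasing order gives the desired $i<j<k<l$ with $m_i+m_j+m_k+m_l=0$ in $\Z/4\Z$.

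The argument is essentially bookkeeping, and the only point requiring care is the disjointness of the three extracted pairs, which the discard-and-repeat scheme guarantees precisely because seven elements leave just enough room for three rounds, making the role of $7=2\cdot 4-1$ transparent. One could instead argue by brute-force case analysis on the multiplicities of the residues $0,1,2,3$ among the $m_i$, but the two-fold reduction modulo $2$ is cleaner and generalises immediately.
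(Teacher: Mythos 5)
Your proof is correct, and it takes a genuinely different route from the paper. You prove the lemma as the $n=4$ case of Erd\H{o}s--Ginzburg--Ziv via the multiplicative reduction $4=2\cdot 2$: greedily extract three disjoint pairs with even sum (possible since $7-2-2=3\geq 3$), write the pair-sums as $2t_1,2t_2,2t_3$, and apply the mod-$2$ pigeonhole once more to the $t_i$ to merge two pairs into a quadruple with sum divisible by $4$; all steps check out, including the disjointness bookkeeping and the legitimacy of passing between $\Z/4\Z$ and integer representatives. The paper instead does exactly the brute-force case analysis you mention in passing: four equal values give a zero sum outright; if all four residues occur, the sum of the three leftover elements can be completed to $0$ by a suitable choice among the four representatives; otherwise at most three values occur, pigeonhole gives a repeated triple which one may translate to $0$, and a short residual case check ($2+2$, $1+3$, or $0+2+2x$ with $x\in\{1,3\}$) finishes. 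Your argument is structurally cleaner, makes the sharpness of $7=2\cdot 4-1$ conceptually transparent, and generalises immediately to $2^k$ (indeed to EGZ multiplicativity), whereas the paper's case analysis is self-contained, slightly shorter on the page, and directly exhibits the witnessing quadruples, which is all that is needed for its application in Proposition~\ref{prop:finite-PN}\eqref{it:Q2}.
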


\begin{proof}
If four $m_i$s share the same value, then we can take them. If all values from $\Z/4\Z$ are represented, then we can take the remaining three numbers, and there will be a choice of the fourth one, which in sum gives zero.

Thus, we may assume that there are at most three values among the numbers  $m_1,\ldots,m_7\in \Z/4\Z$. By the pigeonhole principle, there are at least three same values, and without loss of generality, we may assume $m_1=m_2=m_3=0$ and that the others are nonzero. If there are two values 2, we are done. If there are both of values 1 and 3, we are also done. Hence, the only remaining case is when, without loss of generality, $m_4=2$, and $m_5=m_6=m_7\in\{1,3\}$. Then, $m_1+m_4+m_5+m_6=0$.
\end{proof}

\begin{rmk}
The number 7 in Lemma \ref{lem:divisible-by-4} is the smallest possible because we can consider $0,0,0,1,1,1$, so the statement is not true for six numbers. Hence, the bound 6 in Proposition \ref{prop:finite-PN}\eqref{it:Q2} is the best possible that can be proved using this strategy.
\end{rmk}

\section{Fields with infinite Pythagoras numbers}

The obvious strategy to prove that an algebraic field $K$ has infinite Pythagoras number $P(\mc O_K)$ is to find for arbitrary $n\in\N$ a sum of $n$ squares in $\mc O_K$ that cannot be written as a sum of $n-1$ squares. This constructive approach has its inspirations in \cite{Scharlau,Pollack}.

\subsection{Infinite multiquadratic fields}

Here we prove \Cref{thm:multiquad}. To do this, it is helpful to define some auxiliary notions. Throughout this subsection, let $K$ denote a subfield of $\Q^{(2),+}$. Then any $\alpha\in K$ can be written as a sum
\begin{equation}\label{eq:Q_expansion}
\alpha = \sum_{i=1}^m c_i \sqrt{d_i}, 
\end{equation}
where $m\in\Z_{\ge0}$, $c_i \in \Q\backslash\{0\}$, and $d_i\in\N$ are distinct and squarefree. Since the elements of the set $\{\sqrt{n} \mid n\in\N \text{ squarefree}\}$ are linearly independent over $\Q$, such a sum is unique up to relabelling of the indices. For $n\in\N$ squarefree, we say $\alpha$ \emph{contains} $\sqrt{n}$ if $d_i=n$ for some $i$ in the sum \eqref{eq:Q_expansion}.

\begin{lem}\label{lem:basis-sqrt-p}
Let $p\equiv 1\pmod 4$ be a prime, $L \subseteq K$ a number field containing $\sqrt{p}$.
\begin{enumerate}
\item There is a subfield $M\subseteq L$ in which $p$ is unramified, and such that $\lbrace 1, \frac{1 + \sqrt{p}}{2} \rbrace$ is an $\mc O_M$-basis of $\mc O_L$.
\item For $\beta, \gamma \in \mc{O}_M$ and $\alpha = \beta + \frac{1+\sqrt{p}}{2} \gamma$ with $\gamma \neq 0$, we have
\[
\Tr_{L/\Q}(\alpha^2)\geq \dfrac{p}{4}\Tr_{L/\Q}(\gamma^2) \geq \frac{p}{4} [L:\Q].
\]
In particular, if furthermore $\gamma\neq \pm1$, then we have
\[
\Tr_{L/\Q}(\alpha^2)\geq \dfrac{3p}{8}[L:\Q].
\]
\end{enumerate}
\end{lem}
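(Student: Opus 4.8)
The plan is to treat the two parts separately, relying throughout on the multiquadratic structure of $K$.

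For part (1): since $L\subseteq\qq^{(2),+}$, the extension $L/\qq$ is Galois with elementary abelian $2$-group, so $L=\qq(\sqrt d : [d]\in G)$ for a finite subgroup $G$ of $\qq^{>0}/(\qq^{>0})^2$ containing the class $[p]$. I would set $M$ to be the subfield attached to $H=\ker(v_p\colon G\to\ff_2)$, where $v_p$ records the parity of the $p$-adic valuation of a squarefree representative. Because $v_p([p])=1$, we get $G=H\oplus\langle[p]\rangle$, whence $[L:M]=2$, $\sqrt p\notin M$ and $L=M(\sqrt p)$; and since every class in $H$ has a squarefree representative prime to $p$, the odd prime $p$ is unramified in each quadratic subfield of $M$, hence in $M$. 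For the integral basis I would argue with the relative discriminant: $\omega=\frac{1+\sqrt p}2$ lies in $\mc O_L$, generates $L/M$, and its minimal polynomial $x^2-x+\frac{1-p}4\in\mc O_M[x]$ has discriminant $p$. Thus $\disc(\mc O_M[\omega]/\mc O_M)=p\,\mc O_M$, and comparing with $\disc(\mc O_M[\omega]/\mc O_M)=\mathfrak c^2\,\mathfrak d_{L/M}$, where $\mathfrak c=[\mc O_L:\mc O_M[\omega]]$ is the module index, shows $\mathfrak c^2\mid p\,\mc O_M$; as $p\,\mc O_M$ is squarefree by unramifiedness, $\mathfrak c=\mc O_M$, i.e. $\{1,\omega\}$ is an $\mc O_M$-basis of $\mc O_L$.

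For part (2): I would first reduce to the trace over $L$ (for a general $L'\supseteq L$ one multiplies through by $[L':L]$ using transitivity of the trace and $[L':\qq]=[L':L][L:\qq]$). Writing $\omega^2=\omega+\frac{p-1}4$ and $\Tr_{L/M}(\omega)=1$, a direct computation gives
\[
\Tr_{L/M}(\alpha^2)=2\beta^2+2\beta\gamma+\tfrac{p+1}2\gamma^2=2\bigl(\beta+\tfrac\gamma2\bigr)^2+\tfrac p2\gamma^2.
\]
Applying $\Tr_{M/\qq}$ and using that the trace of a square of a totally real element is a sum of squares over the real embeddings, hence nonnegative, I drop the first summand to obtain $\Tr_{L/\qq}(\alpha^2)\ge\frac p2\Tr_{M/\qq}(\gamma^2)=\frac p4\Tr_{L/\qq}(\gamma^2)$, the last equality being $\Tr_{L/M}(\gamma^2)=2\gamma^2$. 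Finally $\Tr_{L/\qq}(\gamma^2)=\sum_{\tau}\tau(\gamma)^2\ge[L:\qq]\,|N_{L/\qq}(\gamma)|^{2/[L:\qq]}\ge[L:\qq]$ by the AM--GM inequality and $|N_{L/\qq}(\gamma)|\ge1$, which yields the first chain of inequalities.

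For the refined bound it suffices to show $\Tr_{L/\qq}(\gamma^2)\ge\frac32[L:\qq]$ when $\gamma\neq\pm1$, i.e. that the totally positive algebraic integer $\gamma^2$, which differs from $1$, has absolute trace at least $\frac32$. I would prove this from the multiquadratic structure: passing to $\qq(\gamma)=\qq(\sqrt{a_1},\dots,\sqrt{a_r})$ and expanding $\gamma=\sum_T c_T\sqrt{a_T}$, the trace annihilates every irrational term, so the absolute trace of $\gamma^2$ equals $\sum_T c_T^2 a_T$, with $a_\emptyset=1$ and $a_T\ge2$ otherwise; one then checks, using the integrality of $\gamma$, that this quantity is $\ge\frac32$ unless $\gamma=\pm1$, the extremal value $\frac32$ being attained by the golden ratio $\frac{1+\sqrt5}2$. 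Establishing this last estimate uniformly in $r$ — equivalently, the sharp lower bound $\frac32$ for the absolute trace of totally positive algebraic integers other than $1$ — is the one genuinely delicate point, as the completed-square term cannot compensate when $\gamma^2$ is near the extremal configuration; the remaining steps are routine. Should a self-contained argument prove unwieldy, this estimate can instead be quoted from the literature surrounding the Schur--Siegel--Smyth trace problem.
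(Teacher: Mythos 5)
Your proof is correct and in substance follows the paper's route. For part (1), your Kummer-theoretic $M$ (the multiquadratic field attached to $\ker v_p$) is exactly the paper's $M$: the paper writes $L=\qq(\sqrt{p},\sqrt{d_2},\ldots,\sqrt{d_m})$, replaces $d_i$ by $d_i/p$ where $p\mid d_i$, and sets $M=\qq(\sqrt{d_2},\ldots,\sqrt{d_m})$. The one genuine divergence is how the integral basis is verified: the paper invokes the classical compositum theorem — $\disc \qq(\sqrt{p})=p$ is coprime to $\disc M$, so the product of integral bases of $\mc O_M$ and $\mc O_{\qq(\sqrt{p})}=\zz\big[\frac{1+\sqrt{p}}{2}\big]$ is an integral basis of $\mc O_L$ — whereas you compute the relative discriminant $p\,\mc O_M$ of the order $\mc O_M\big[\frac{1+\sqrt{p}}{2}\big]$ and kill the conductor using that $p\,\mc O_M$ is squarefree by unramifiedness. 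Both are standard; yours works at the relative level and is marginally more self-contained. For part (2), your $\Tr_{L/M}$ computation is literally the paper's completion of the square, $\Tr_{L'/\qq}(\alpha^2)=\Tr_{L'/\qq}\big(\big(\beta+\frac{\gamma}{2}\big)^2\big)+\frac{p}{4}\Tr_{L'/\qq}(\gamma^2)$, and your transitivity reduction is the correct reading of the paper's (undefined) $L'$; your AM--GM justification of $\Tr_{L'/\qq}(\gamma^2)\geq[L':\qq]$ spells out what the paper leaves implicit.

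The only point needing care is the refined bound. The paper settles it in one line by citing Siegel \cite[Theorem~III]{Siegel45}: a totally positive algebraic integer $\delta\neq 1$ satisfies $\Tr_{\qq(\delta)/\qq}(\delta)\geq\frac{3}{2}[\qq(\delta):\qq]$, applied to $\delta=\gamma^2$ (legitimate, since $\gamma\in\mc O_M\setminus\{0,\pm 1\}$ makes $\gamma^2$ totally positive, integral, and $\neq 1$). Your sketched self-contained alternative — showing $\sum_T c_T^2 a_T\geq\frac{3}{2}$ directly from the integrality of $\gamma$ — is not the near-routine check your phrasing suggests: integral elements of multiquadratic fields can carry denominators $4$ (and larger) in the $\sqrt{a_T}$-expansion, cf. the element $\frac{1+\sqrt{p_1p_2}+\sqrt{p_1p_3}+\sqrt{p_2p_3}}{4}$ in Remark~\ref{rmk:Q2-argument-doesn't-work-in-general} of this paper, so controlling the coefficient combinatorics uniformly in the number of generators is essentially the trace problem restricted to squares of multiquadratic integers — a genuine project, not a verification. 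Since you explicitly flag this and offer the literature citation as a fallback, your argument is complete as written, and with that citation it coincides with the paper's.
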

\begin{proof}
We first prove (1). 
We may write $L=\Q(\sqrt{p},\sqrt{d_2},\ldots,\sqrt{d_m})$ for some $d_i\in\N$ squarefree. Possibly replacing $d_i$ by $d_i/p$, we may assume that $p\nmid d_2\cdots d_m$. Then, $L$ is the compositum of $\Q(\sqrt{p})$ and $M=\Q(\sqrt{d_2},\ldots,\sqrt{d_m})$ and $p$ does not divide $\disc(M)$, hence it is unramified at $M$. Since the discriminants of $\Q(\sqrt{p})$ and $M$ are coprime, one can form an integral basis for $\mc O_L$ as the product of integral bases for $\mc O_M$ and $\mc O_{\Q(\sqrt{p})}$. In particular, the latter can be taken to be $\{1,\frac{1+\sqrt{p}}{2}\}$. So, there are $\beta,\gamma\in \mc O_M$ such that $\alpha=\beta+\gamma \frac{1+\sqrt{p}}{2}$.

Now we prove~(2). We expand
\[
\alpha^2=\beta^2+\beta\gamma+\beta\gamma\sqrt{p}+\gamma^2\dfrac{1+p+2\sqrt{p}}{4}.
\]
Since $M\cap \Q(\sqrt{p})=\Q$, we have $\Tr_{L/\Q}(\beta\gamma\sqrt{p})=\Tr_{L/\Q}(\gamma^2\sqrt{p})=0$. Hence
\[
\Tr_{L/\Q}(\alpha^2)=\Tr_{L/\Q}\left(\beta^2+\beta\gamma+\dfrac{\gamma^2}{4}+\dfrac{p}{4}\gamma^2\right)=\Tr_{L/\Q}\left(\left(\beta+\dfrac{\gamma}{2}\right)^2\right)+\dfrac{p}{4}\Tr_{L/\Q}(\gamma^2)\geq \dfrac{p}{4}\Tr_{L/\Q}(\gamma^2).
\]
The final statement follows from Siegel's theorem~\cite[Theorem~III]{Siegel45} that for a totally positive integer $\delta\neq 1$, we have $\Tr_{\Q(\delta)/\Q}(\delta)\geq \frac{3}{2}[\Q(\delta):\Q]$. For $\delta=1$, it is clear that $\Tr_{L'/\Q}(1)=[L':\Q]$ for any number field $L'$.
\end{proof}

Now we are ready to establish the main ingredient of the proof of \Cref{thm:multiquad}.

\begin{lem}\label{lem:unbounded-number-of-squares}
Let $\{p_n\}_{n\in \N}$ be a sequence of prime numbers, such that $p_n\equiv 1\pmod{4}$ for all $n\in\N$, and 
\begin{equation}\label{eq:estimate-on-p_n}
p_{n+1}>2+2\sum_{i=1}^n(1+p_i) \quad \forall n\in\N.
\end{equation}
For $n\in\N$, we define
\begin{equation}\label{eq:unrepresentable-number}
s_n=\sum_{i=1}^{n}\left(\dfrac{1+\sqrt{p_i}}{2}\right)^2. 
\end{equation}
Then $s_n$ cannot be expressed as a sum of $n-1$ squares in $\mc{O}_{\Q^{(2),+}}$. 
\end{lem}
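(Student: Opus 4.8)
The plan is to induct on $n$, showing that a representation of $s_n$ as a sum of $n-1$ squares in $\mc O_{\qq^{(2),+}}$ would force a representation of $s_{n-1}$ as a sum of $n-2$ squares, contradicting the case $n-1$. The base case $n=1$ is immediate, since a sum of $0$ squares is $0$ whereas $s_1 = \left(\frac{1+\sqrt{p_1}}{2}\right)^2 \neq 0$; note also that any initial segment $p_1, \ldots, p_{n-1}$ again satisfies \eqref{eq:estimate-on-p_n}, so the induction is legitimate.

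For the inductive step, suppose $s_n = \sum_{j=1}^{n-1}\alpha_j^2$ with $\alpha_j \in \mc O_{\qq^{(2),+}}$, and fix a number field $L \subseteq \qq^{(2),+}$ containing $\sqrt{p_1}, \ldots, \sqrt{p_n}$ and all the $\alpha_j$. First I would bound $\Tr_{L/\qq}(s_n)$ from above. Since $L$ is multiquadratic, every $\sqrt{p_i}$ is traceless, and expanding $\left(\frac{1+\sqrt{p_i}}{2}\right)^2 = \frac{1+p_i}{4} + \frac{\sqrt{p_i}}{2}$ gives $\Tr_{L/\qq}(s_n) = [L:\qq]\sum_{i=1}^n \frac{1+p_i}{4}$. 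The growth hypothesis \eqref{eq:estimate-on-p_n}, in the form $p_n > 2 + 2\sum_{i=1}^{n-1}(1+p_i)$, then yields $\sum_{i=1}^n\frac{1+p_i}{4} < \frac{1+p_n}{4} + \frac{p_n-2}{8} = \frac{3p_n}{8}$, whence $\Tr_{L/\qq}(s_n) < \frac{3p_n}{8}[L:\qq]$.

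The heart of the argument is to compare this with the right-hand side through \Cref{lem:basis-sqrt-p} applied to $p_n$. Let $M \subseteq L$ be as in part (1) for the prime $p_n$, and write $\alpha_j = \beta_j + \frac{1+\sqrt{p_n}}{2}\gamma_j$ with $\beta_j, \gamma_j \in \mc O_M$; then part (2) (with $L'=L$) gives $\Tr_{L/\qq}(\alpha_j^2) \geq \frac{3p_n}{8}[L:\qq]$ when $\gamma_j \notin \{0,\pm 1\}$ and $\Tr_{L/\qq}(\alpha_j^2) \geq \frac{p_n}{4}[L:\qq]$ when $\gamma_j = \pm 1$. Letting $a$ and $b$ count the indices of these two types and summing, the identity $\sum_j \Tr_{L/\qq}(\alpha_j^2) = \Tr_{L/\qq}(s_n) < \frac{3p_n}{8}[L:\qq]$ gives $\frac{3}{8}a + \frac{1}{4}b < \frac{3}{8}$, which forces $a = 0$ and $b \le 1$. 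Since $s_n$ has nonzero $\sqrt{p_n}$-coefficient, not all $\gamma_j$ vanish, so $b \geq 1$; hence exactly one $\gamma_j$, say $\gamma_1 = \pm 1$, is nonzero.

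Finally, writing $\alpha_j = a_j + b_j\sqrt{p_n}$ with $a_j, b_j$ free of $\sqrt{p_n}$ and $b_j = \gamma_j/2$, comparing the coefficient of $\sqrt{p_n}$ on both sides of $s_n = \sum_j \alpha_j^2$ yields $2a_1 b_1 = \frac{1}{2}$; together with $b_1 = \pm\frac{1}{2}$ this forces $\alpha_1 = \pm\frac{1+\sqrt{p_n}}{2}$, so $\alpha_1^2 = \left(\frac{1+\sqrt{p_n}}{2}\right)^2$. Subtracting then gives $s_{n-1} = \sum_{j=2}^{n-1}\alpha_j^2$, a sum of $n-2$ squares in $\mc O_{\qq^{(2),+}}$, contradicting the inductive hypothesis. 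The main obstacle is the exact calibration of the constants: the growth condition \eqref{eq:estimate-on-p_n} is tuned precisely so that $\Tr_{L/\qq}(s_n)$ falls strictly below $\frac{3p_n}{8}[L:\qq]$, while Siegel's bound $\frac{3}{2}$ feeding \Cref{lem:basis-sqrt-p} makes both a single $\gamma_j \notin\{0,\pm1\}$ and two unit values of $\gamma_j$ too expensive; arranging these inequalities so that exactly $b=1$ survives is what closes the induction.
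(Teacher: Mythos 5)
Your proof is correct and takes essentially the same route as the paper's: the same trace calibration from \Cref{lem:basis-sqrt-p} against the growth condition \eqref{eq:estimate-on-p_n}, forcing exactly one $\alpha_j = \pm\frac{1+\sqrt{p_n}}{2}$, subtracting its square, and closing by induction. The only cosmetic differences are that you eliminate the bad cases in one counting step ($\tfrac38 a + \tfrac14 b < \tfrac38$) where the paper argues sequentially (first ruling out $\gamma \neq \pm 1$, then a second nonzero $\gamma_j$), and that you deduce $\beta_1 = 0$ by comparing $\sqrt{p_n}$-coefficients rather than citing the $\mc O_M$-basis decomposition directly.
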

\begin{proof}
Let
\begin{equation}\label{eq:represent-smaller}
s_n=\alpha_1^2+\cdots+\alpha_m^2    
\end{equation}
be a representation of $s_n$ as a sum of $m$ squares in $\mc{O}_{\Q^{(2),+}}$. We observe that~\eqref{eq:represent-smaller} is defined over a number field $L$,
which we may assume to contain $\sqrt{p_1},\ldots,\sqrt{p_n}$.
We apply \Cref{lem:basis-sqrt-p}(1) to find a subfield $M$ of $L$ in which $p_n$ is unramified and such that $\lbrace 1, \frac{1+\sqrt{p_n}}{2}\rbrace$ is a $\mc O_M$-basis of $L$.
Since $s_n$ contains $\sqrt{p_n}$,
there is some $\alpha_i\in\mc O_L$, such that $\alpha_i^2$ contains $\sqrt{p_n}$.
Without loss of generality, we may take $i=1$, and denote $\alpha_1$ simply by $\alpha$.

It follows that we may write $\alpha = \beta + \frac{1+\sqrt{p_n}}{2}\gamma$ for some $\beta, \gamma \in \mc O_M$ with $\gamma \neq 0$.
If $\gamma\neq \pm1$, then by \Cref{lem:basis-sqrt-p}(2), we have 
\[\Tr_{L/\Q}(\alpha^2)\geq \dfrac{3p_n}{8}[L:\Q].\]
Taking $\Tr_{L/\Q}$ to \eqref{eq:represent-smaller} and keeping in mind \eqref{eq:unrepresentable-number}, we have
\[
[L:\Q]\sum_{i=1}^{n}\dfrac{1+p_i}{4}=\Tr_{L/\Q}(\alpha^2)+\sum_{j=2}^{m}\Tr_{L/\Q}(\alpha_j^2)\geq  \dfrac{3p_n}{8}[L:\Q].
\]
However, this contradicts the assumption \eqref{eq:estimate-on-p_n}.
Thus, we have that $\gamma=\pm1$, and we may suppose that $\gamma=1$. 
By a similar argument, we see that $\alpha_2^2, \ldots, \alpha_m^2$ do not contain $\sqrt{p_n}$: if, for example, $\alpha_2^2$ were to contain $\sqrt{p_n}$, then
 we could write $\alpha_2 = \beta_2 + \frac{1+\sqrt{p_n}}{2}\gamma_2$ for some $\beta_2, \gamma_2 \in \mc O_M$ with $\gamma_2 \neq 0$, and
by Lemma~\ref{lem:basis-sqrt-p}(2) we would have
\[
[L:\Q]\sum_{i=1}^{n}\dfrac{1+p_i}{4} \geq \Tr_{L/\Q}(\alpha^2)+\Tr_{L/\Q}(\alpha_2^2)\geq \dfrac{p_n}{2}[L: \Q],
\]
again contradicting assumption \eqref{eq:estimate-on-p_n}. We conclude that $\alpha_2^2, \ldots, \alpha_m^2$ do not contain $\sqrt{p_n}$, and in fact lie in $M$.

Now \eqref{eq:unrepresentable-number} and~\eqref{eq:represent-smaller} become
\begin{equation*}
s_{n-1} = \sum_{i=1}^{n-1}\left(\dfrac{1+\sqrt{p_i}}{2}\right)^2 =\beta(1+\sqrt{p_n}) +\beta^2+\sum_{j=2}^m \alpha_j^2, 
\end{equation*}
and using that $\{ 1, \sqrt{p_n} \}$ is an $M$-basis of $L$, we deduce that $\beta = 0$.
We see that $s_{n-1}$ is a sum of $m-1$ squares.
By induction on $n$, we conclude that $m \geq n$.
\end{proof}

\begin{proof}[Proof of \Cref{thm:multiquad}]
Since $S \subseteq \nat$ contain infinitely many prime numbers $p \equiv 1 \pmod 4$, we can find a sequence $\{p_n\}_{n\in\N}$ that satisfies the assumptions of \Cref{lem:unbounded-number-of-squares}, with $\sqrt{p_n}\in K= \qq( \sqrt{n} \mid n \in S)$ for each $n\in\N$. Then for each $n\in\N$ the constructed element $s_n \in K$ in \Cref{lem:unbounded-number-of-squares} is a sum of $n$ squares in $\mc O_K$, but not a sum of $n-1$ squares in $\mc O_K \subseteq \mc O_{\Q^{(2),+}}$. Thus, $P(\mc O_K)=\infty$.
\end{proof}

\begin{rmk}\label{rmk:more-general}
Let $K=\qq(\sqrt{n} \mid n \in S )$, where $S$ contains infinitely many $n\equiv 1\pmod{4}$ such that for all $m\in S$, we have $n\mid m$ or $\gcd(m,n)=1$. Then a similar proof shows that $P(\mc O_K)=\infty$.
\end{rmk}

\begin{rmk}\label{rmk:Q2-argument-doesn't-work-in-general}

In contrast to \Cref{rmk:more-general}, our method is not sufficient to prove that $P(\mc{O}_K) = \infty$ for a general infinite multiquadratic field $K$. Our strategy uses that if $p\equiv 1\pmod{4}$ 
and $\alpha \in \qq^{(2),+}$ such that $\alpha^2$ contains $\sqrt{p}$, then by Lemma~\ref{lem:basis-sqrt-p}(1), we can write $\alpha=\beta+\gamma\dfrac{1+\sqrt{p}}{2}$, where $\beta,\gamma\in \qq^{(2),+}$ are such that $\qq(\beta,\gamma)\cap\qq(\sqrt{p})=\qq$. This is the key step, which we use later to prove that if $\gamma\neq \pm1$, then by Lemma~\ref{lem:basis-sqrt-p}(2)
\begin{equation}\label{eq:trace-inequality}
\frac{1}{[\Q(\alpha) : \Q]}\left(\Tr_{\qq(\alpha)/\qq}(\alpha^2)-\Tr_{\qq(\alpha)/\qq}\left(\dfrac{1+\sqrt{p}}{2}\right)^2\right)\geq \frac{p-2}{8}.    
\end{equation}
We use this inequality to choose large enough $p$ to prove that we must have $\gamma=\pm1$ in this setting. However, the analogue of Lemma~\ref{lem:basis-sqrt-p}(1) is not true for composite numbers, hence the condition in Remark~\ref{rmk:more-general}. 

We can find examples where inequality \eqref{eq:trace-inequality} is not satisfied, even stronger, that the trace of $\alpha^2$ can be small.
Let $p_1<p_2<p_3$ be three primes congruent to 1 modulo 4 and with $p_2 < 2p_1$.

Let $K=\Q(\sqrt{p_1p_2},\sqrt{p_1p_3})$ and let $$\alpha=\frac{\sqrt{p_1p_2}+\sqrt{p_1p_3}+\sqrt{p_2p_3}+1}{4}\in \mc O_K.$$
Then $K=\Q(\alpha)$ and $\alpha^2$ contains $\sqrt{p_1p_3}$, but our assumptions on $p_1,p_2,p_3$ imply
$$\Tr_{\qq(\alpha)/\qq}(\alpha^2)=\dfrac{p_1p_2+p_1p_3+p_2p_3+1}{4}<1+p_1p_3=\Tr_{\qq(\alpha)/\qq}\left(\dfrac{1+\sqrt{p_1p_3}}{2}\right)^2.$$
Consequently, we cannot use our proof technique, for example,
to show that $P(\mc{O}_K) = \infty$ when $K = \qq(\sqrt{pq} \mid p \neq q \text{ prime number})$.
\end{rmk}

\subsection{Infinite real cyclotomic fields}

We prove \Cref{thm:2} by a careful study of the arithmetic of real cyclotomic fields $K_{p^n}^+$. We establish the following proposition, which immediately implies \Cref{thm:2}.
\begin{prop}\label{prp:cyclotomic_many_squares}
Let $p\equiv 1\pmod{4}$ be a prime. For $2\le m \in\N$, we define
\[
t_m = \sum_{k=2}^m \rb{\zeta_{p^k}+\zeta_{p^k}^{-1}}^2.
\]
Then $t_m$ cannot be expressed as a sum of $m-2$ squares in $\mc O_{K_{p^\infty}^+}$.
\end{prop}
The rest of the section is devoted to the proof of \Cref{prp:cyclotomic_many_squares}. Let $p\equiv 1\pmod{4}$ be a prime, $n\in\N$, and write $\omega(k) = \omega(p^n,k) := \zeta_{p^n}^k + \zeta_{p^n}^{-k}$, for $k\in\Z$. It is well known that
\[
[K_{p^n}^+:\Q] = \frac{\varphi(p^n)}2 = \frac{(p-1)p^{n-1}}2,
\]
and $\mc O_{K_{p^n}^+} = \Z[\omega(1)]$ (see, e.g. Proposition~2.16 in \cite{Washington}). It follows that the set
\begin{equation}\label{eq:cyclotomic_basis}
\cb{1,\omega(1),\omega(2),\ldots,\omega(\tfrac{p-1}2 p^{n-1}-1)}
\end{equation}
is a $\Z$-basis of $\mc O_{K_{p^n}^+}$. We shall also make use of the following formulas, which are easy to verify:
\begin{align}
\omega(k)\omega(l) &= \omega(k+l) + \omega(k-l),\nonumber\\
\omega(p^n-k) &= \omega(k), \label{eq:rf}\\
\omega(\tfrac{p-1}2 p^{n-1}) &= - 1 - \sum_{l=1}^{\frac{p-3}2} \omega(lp^{n-1}),\label{eq:cyclotomic_decay}\\
\omega(\tfrac{p-1}2 p^{n-1}+s) &= - \sum_{l=0}^{\frac{p-3}2} \omega(lp^{n-1}+s) - \sum_{l=1}^{\frac{p-1}2} \omega(lp^{n-1}-s), & &(1\le s \le \tfrac{p^{n-1}-1}2). \label{eq:d2} 
\end{align}

Let $\alpha \in \mc O_{K_{p^n}^+}$. In terms of the basis \eqref{eq:cyclotomic_basis}, we may write $\alpha$ as a linear combination
\begin{equation}\label{eq:cyclotomic_expansion}
\alpha = a_0 + \sum_{k=1}^{\frac{p-1}2 p^{n-1}-1} a_k \omega(k), \quad a_k \in \Z.
\end{equation}
For $0\le k \le \frac{p-1}2 p^{n-1}-1$, we write $C_k(\alpha):= a_k$ the $k$-th coefficient of $\alpha$ in the expansion \eqref{eq:cyclotomic_expansion}. For $\alpha$ given as in \eqref{eq:cyclotomic_expansion}, its square is given by
\begin{equation}\label{eq:a2_expand}
\alpha^2 = a_0^2 + \sum_{k=1}^{\frac{p-1}2 p^{n-1}-1} a_k^2(\omega(2k)+2) + 2 \sum_{k=1}^{\frac{p-1}2 p^{n-1}-1} a_0a_k \omega(k) + 2\sum_{k> l \ge 1}^{\frac{p-1}2 p^{n-1}-1} a_ka_l(\omega(k+l)+\omega(k-l)).
\end{equation}
In particular, we have
\begin{equation}\label{eq:cyclotomic_mod2}
\alpha^2 \equiv a_0^2 + \sum_{k=1}^{\frac{p-1}2 p^{n-1}-1} a_k^2\omega(2k) \pmod{2}.
\end{equation}

We compute the constant coefficient $C_0(\alpha^2)$ of $\alpha^2$. Using \eqref{eq:rf}, \eqref{eq:cyclotomic_decay}, and \eqref{eq:d2}, we find that for $1\le k \le p^n-1$, we have
\[
C_0(\omega(k)) = \begin{cases} -1 & \text{if } k=\frac{p\pm 1}2 p^{n-1},\\ 0 & \text{otherwise.}\end{cases}
\]
Plugging this into \eqref{eq:a2_expand}, noting that $p\equiv 1\pmod{4}$, yields
\begin{equation}\label{eq:ctb_pre}
C_0(\alpha^2) = a_0^2 + \sum_{k=1}^{\frac{p-1}2 p^{n-1}-1} 2a_k^2 - a_{\frac{p-1}4p^{n-1}}^2 - 2\sum_{\substack{k>l\ge 1\\ k+l = \frac{p\pm 1}2 p^{n-1}}}^{\frac{p-1}2 p^{n-1}-1} a_ka_l.
\end{equation}
Observing that 
\begin{align*} \sum_{\substack{k>l\ge 1\\ k+l = \frac{p-1}2 p^{n-1}}}^{\frac{p-1}2 p^{n-1}-1} (a_k-a_l)^2 &= \sum_{\substack{k=1\\ k\ne \frac{p-1}4 p^{n-1}}}^{\frac{p-1}2p^{n-1}-1} a_k^2 - 2\sum_{\substack{k>l\ge 1\\ k+l = \frac{p-1}2 p^{n-1}}}^{\frac{p-1}2 p^{n-1}-1} a_ka_l,\\
\intertext{and}
\sum_{\substack{k>l\ge 1\\ k+l = \frac{p+1}2 p^{n-1}}}^{\frac{p-1}2 p^{n-1}-1} (a_k-a_l)^2 &= \sum_{\substack{k=p^{n-1}+1}}^{\frac{p-1}2p^{n-1}-1} a_k^2 - 2\sum_{\substack{k>l\ge 1\\ k+l = \frac{p+1}2 p^{n-1}}}^{\frac{p-1}2 p^{n-1}-1} a_ka_l,
\end{align*}
it follows that \eqref{eq:ctb_pre} can be rewritten as
\[
C_0(\alpha^2) = \sum_{k=0}^{p^{n-1}} a_k^2 + \sum_{\substack{k>l\ge 1\\ k+l = \frac{p\pm 1}2 p^{n-1}}}^{\frac{p-1}2 p^{n-1}-1} (a_k-a_l)^2.
\]
For $0\le w \le \frac{p^{n-1}-1}2$, define
\begin{align*}
A_w(\alpha) := &a_w^2 + a_{p^{n-1}-w}^2 + \sum_{\substack{k>l\ge 1\\ k+l = \frac{p\pm 1}2 p^{n-1}\\ k\equiv \pm w\pmod{p^{n-1}}}}^{\frac{p-1}2 p^{n-1}-1} (a_k-a_l)^2,
\end{align*}
so we have
\[
C_0(\alpha^2) = \sum_{k=0}^{\frac{p^{n-1}-1}2} A_k(\alpha).
\]
For $1\le w \le \frac{p^{n-1}-1}2$, we can rearrange the sum $A_w(\alpha)$ by placing terms with the same coefficients next to each other, and write
\begin{equation}\label{eq:Aw}\begin{aligned}
A_w(\alpha) = &a_w^2 + (a_w - a_{\frac{p-1}2 p^{n-1}-w})^2 + (a_{\frac{p-1}2 p^{n-1}-w} - a_{p^{n-1}+w})^2 + (a_{p^{n-1}+w} - a_{\frac{p-3}2 p^{n-1}-w})^2\\
&\quad + \cdots + (a_{2p^{n-1}-w} - a_{\frac{p-3}2 p^{n-1}+w})^2 + (a_{\frac{p-3}2 p^{n-1}+w} - a_{p^{n-1}-w})^2 + a_{p^{n-1}-w}^2, 
\end{aligned}\end{equation}
We will find it helpful to consider the expansion in \eqref{eq:Aw} in a more combinatorial fashion.
The set $I_w$ of integers appearing as an index in the expansion \eqref{eq:Aw} consists precisely of those integers $k$ with $1 \leq k \leq \frac{p-1}{2}p^{n-1}-1$ for which $k \equiv \pm w \pmod{p^{n-1}}$.
We have $\lvert I_w \rvert = p-1$. We fix a dummy element $\ast$, and define on $I_w^\ast = I_w \cup \lbrace \ast \rbrace$ the cyclic permutation
\begin{equation}\label{eq:R-defi}
R_w : I_w^\ast \to I_w^\ast , \quad x \mapsto \begin{cases}
\ast &\text{if } x = p^{n-1} - w, \\
w &\text{if } x = \ast, \\
\frac{p-1}{2}p^{n-1} - x &\text{if } x \equiv w \pmod {p^{n-1}}, \\
\frac{p+1}{2}p^{n-1} - x &\text{if } x \equiv -w \pmod {p^{n-1}} \text{ and } x \neq p^{n-1} - w.
\end{cases}
\end{equation}
If we take the convention that $a_{\ast} = 0$, the equality \eqref{eq:Aw} can then be rewritten as
\begin{equation}\label{eq:Aw-2}
A_w(\alpha) = \sum_{k \in I_w^\ast} (a_k - a_{R_w(k)})^2 = \sum_{i=0}^{p-1} (a_{R^i_w(k_0)} - a_{R_w^{i+1}(k_0)})^2
\end{equation}
where $k_0 \in I_w^\ast$ is any fixed element.
We can make this more visual as follows.
We can define a graph with set of vertices $I_w$, and where two vertices are adjacent if one gets mapped to the other by $R_w$ (or, equivalently, if they appear in the same bracketed term in \eqref{eq:Aw}). This gives the following:
$$
\begin{tikzcd}[column sep=small]
w
  \arrow[r, bend left=30, "R_w"]
  & {\tfrac{p-1}{2}p^{n-1} - w}
    \arrow[l, bend left=30, "R_w^{-1}"]
  \arrow[r, bend left=30, "R_w"]
  & {p^{n-1} + w}
    \arrow[l, bend left=30, "R_w^{-1}"]
  \arrow[r, bend left=30, "R_w"]
  & {\tfrac{p-3}{2}p^{n-1} - w}
    \arrow[l, bend left=30, "R_w^{-1}"]
  \arrow[r, bend left=30, "R_w"]
  & \cdots
    \arrow[l, bend left=30, "R_w^{-1}"]
  \arrow[r, bend left=30, "R_w"]
  & {\tfrac{p-3}{2}p^{n-1} + w}
    \arrow[l, bend left=30, "R_w^{-1}"]
  \arrow[r, bend left=30, "R_w"]
  & {p^{n-1} - w}
    \arrow[l, bend left=30, "R_w^{-1}"],
\end{tikzcd}
$$
Given that $R_w^{-2}(w)=R_w^{-1}(\ast)=p^{n-1}-w$ and $R_w^2(p^{n-1}-w)=R_w(\ast)=w$, the graph can naturally be extended to a cycle of length $p$:

\begin{equation}\label{eq:diagram}
\begin{tikzcd}
	& {p^{n-1} + w} & \cdots & {2p^{n-1} - w} \\
	{\frac{p-1}{2}p^{n-1} - w} &&&& {\frac{p-3}{2}p^{n-1}+w} \\
	& w & \ast & {p^{n-1}-w}
	\arrow["{R_w}", curve={height=-12pt}, from=1-2, to=1-3]
	\arrow["{R^{-1}_w}", curve={height=-2.5pt}, from=1-2, to=2-1]
	\arrow["{R^{-1}_w}", curve={height=-12pt}, from=1-3, to=1-2]
	\arrow["{R_w}", curve={height=-12pt}, from=1-3, to=1-4]
	\arrow["{R^{-1}_w}", curve={height=-12pt}, from=1-4, to=1-3]
	\arrow["{R_w}", curve={height=-12pt}, from=1-4, to=2-5]
	\arrow["{R_w}", curve={height=-12pt}, from=2-1, to=1-2]
	\arrow["{R^{-1}_w}", curve={height=-6pt}, from=2-1, to=3-2]
	\arrow["{R^{-1}_w}", curve={height=-6pt}, from=2-5, to=1-4]
	\arrow["{R_w}", curve={height=-18pt}, from=2-5, to=3-4]
	\arrow["{R_w}", curve={height=-18pt}, from=3-2, to=2-1]
	\arrow["{R^{-1}_w}", curve={height=-12pt}, from=3-2, to=3-3]
	\arrow["{R_w}", curve={height=-12pt}, from=3-3, to=3-2]
	\arrow["{R^{-1}_w}", curve={height=-12pt}, from=3-3, to=3-4]
	\arrow["{R^{-1}_w}", curve={height=-2.5pt}, from=3-4, to=2-5]
	\arrow["{R_w}", curve={height=-12pt}, from=3-4, to=3-3]
\end{tikzcd}\
\end{equation}
Clearly, for any vertex $k$, $R_w^i(k)$ visits every possible vertex in the diagram exactly once, for $i=0,\ldots,p-1$, where we use the notation $R_w^0(k)=k$.

To prove \Cref{prp:cyclotomic_many_squares}, we need some results concerning $A_w(\alpha)$.

\begin{lem}\label{lem:Aw_minimal_positive_value}
For $1\le w \le \frac{p^{n-1}-1}2$, $A_w(\alpha)$ is even. In particular, if $A_w(\alpha) \ne 0$, then $A_w(\alpha)\ge 2$.
\end{lem}
\begin{proof}
This follows immediately by reducing \eqref{eq:Aw-2} modulo 2, using that $R_w$ is a permutation on $I_w^\ast$.
\end{proof}
We can also specify the possible values of the $a_k$'s when $A_w(\alpha) = 2$.
\begin{lem}\label{lem:Aw2_cases}
Let $1 \le w \le \frac{p^{n-1}-1}2$, and suppose $A_w(\alpha) = 2$. 
Then there exists $k \in I_w$ and $s \in \lbrace 1 \ldots, p-1 \rbrace$ such that $a_{k} = a_{R_w(k)} = a_{R_w^2(k)} = \cdots = a_{R_w^{s-1}(k)} \in \{ \pm 1 \}$ and $a_{R_w^i(k)} = 0$ for $i \in \lbrace s, \ldots, p-1 \rbrace$.
\end{lem}
\begin{proof}
It follows by \eqref{eq:Aw-2} that there must be exactly $2$ elements $k \in I_w^\ast$ for which $a_k \neq a_{R_w(k)}$, and in this case, either $a_k = 0$ and $a_{R_w(k)} \in \{ \pm 1 \}$, or vice versa.
As $R_w$ is a cyclic permutation on $I_w^\ast$, the statement follows.
\end{proof}

\begin{lem}\label{lem:cross_term_nonvanishing}
Let $U\subseteq\{0,1,\ldots,n-2\}$ with $|U|\ge 2$, and $\alpha \in \mathcal O_{K_{p^n}^+}$ be such that
\begin{itemize}
    \item when $\alpha$ is written as in \eqref{eq:cyclotomic_expansion}, we have $a_k = 0$ unless $k \equiv \pm p^u\pmod{p^{n-1}}$ for some $u\in U$; and
    \item $A_{p^u}(\alpha)= 2$ for all $u\in U$, and $A_{p^u}(\alpha)= 0$ for $u\not\in U$.
\end{itemize}
Let $V = \{v_1,v_2\}\subseteq\{0,1,\ldots,n-2\}$.
\begin{enumerate}
    \item If $V\not\subseteq U$, then $C_m(\alpha^2) = 0$ for every integer $m$ with $0 \leq m \leq \frac{p-1}{2}p^{n-1} - 1$ and $m\equiv \pm(p^{v_1}-p^{v_2}) \pmod{p^{n-1}}$.
    \item If $V\subseteq U$, then $C_m(\alpha^2)\ne 0$ for some integer $m$ with $0 \leq m \leq \frac{p-1}{2}p^{n-1} - 1$ and $m\equiv \pm(p^{v_1}- p^{v_2}) \pmod{p^{n-1}}$.
\end{enumerate}
\end{lem}
In the above statement and in its proof, when we write $x \equiv \pm y \pmod{z}$ for non-zero integers $x, y$ and $z$ we mean that either $x \equiv y \pmod{z}$ or $x \equiv -y \pmod{z}$.
\begin{proof}
We start with the formula \eqref{eq:a2_expand} for $\alpha^2$. By collecting the coefficients of the symbols $\omega(k)$ in \eqref{eq:a2_expand} and identifying $\omega(p^n-k)$ with $\omega(k)$ using \eqref{eq:rf}, we write
\[
\alpha^2 = d_0 + \sum_{l=1}^{\frac{p^n-1}2} d_l \omega(l),
\]
where $d_l$ is the sum of coefficients of the symbols $\omega(l)$ and $\omega(p^n-l)$ in \eqref{eq:a2_expand} for $1\le l \le \frac{p^n-1}2$, and $d_0$ denotes the constant term in \eqref{eq:a2_expand}. Since we have $a_k=0$ unless $k\equiv \pm p^u \pmod{p^{n-1}}$ for some $u\in U$, it follows that $d_l$ can be nonzero only when $l\equiv 0, \pm p^{u_1}, \pm p^{u_1}\pm p^{u_2}\pmod{p^{n-1}}$ for some (not necessarily distinct) $u_1,u_2 \in U$. Meanwhile, using the conversion formula \eqref{eq:d2}, we find for $0\le s \le \frac{p-3}2$ and $1\le w \le p^{n-1}-1$ that
\begin{equation}\label{eq:two_terms}
C_{sp^{n-1}+w}(\alpha^2) = \begin{cases} d_{sp^{n-1}+w} - d_{\frac{p-1}2p^{n-1}+w} & \text{if } 1\le w \le \frac{p^{n-1}-1}2\\ d_{sp^{n-1}+w} - d_{\frac{p+1}2p^{n-1}-w} & \text{if } \frac{p^{n-1}+1}2 \le w \le p^{n-1}-1.\end{cases}
\end{equation}
Crucially, given that $\frac{p-1}{2}p^{n-1}+w=\frac{p+1}{2}p^{n-1}- (p^{n-1}-w)$, we have that the term $d_{\frac{p-1}{2}p^{n-1}+w}$ appears in both the top and the bottom equalities above, for $1\le w \le \frac{p^{n-1}-1}2$.

Suppose $V = \{v_1,v_2\} \not\subseteq U$, and let $m$ be an integer with $1 \leq m \leq \frac{p-1}{2}p^{n-1} - 1$ and $m \equiv \pm(p^{v_1} - p^{v_2}) \pmod{p^{n-1}}$. By the reasoning above, we find that $d_l = 0$ for all integers $l$ with $0 \leq l \leq \frac{p^n-1}{2}$, $l\equiv\pm p^{v_1}\pm p^{v_2}$ (noting that $p\ge 5$). It then follows from \eqref{eq:two_terms} that $C_m(\alpha^2) = 0$. This gives (1).

Now suppose $V = \{v_1,v_2\} \subseteq U$.
For an integer $l$ with $1 \leq l \leq \frac{p^n - 1}{2}$ and $l \equiv \pm(p^{v_1} - p^{v_2})\pmod{p^{n-1}}$, the coefficient $d_l$ is given by
\begin{equation}\label{eq:d_l}
    d_l = \sum_{\substack{1\le k_1, k_2 \le \frac{p-1}2p^{n-1}-1\\ (k_1,k_2) \equiv (\pm p^{v_1}, \pm p^{v_2})\pmod{p^{n-1}}\\ k_1 \pm k_2 \equiv \pm l \pmod{p^n}}} 2a_{k_1}a_{k_2}. 
\end{equation}
The terms appearing in the sum above can be described in terms of $R_w$ introduced in \eqref{eq:R-defi}; we will simply write $R$ instead of $R_w$ to ease the notation. Specifically, consider some fixed integers $k_1, k_2$ with $1 \leq k_1, k_2 \leq \frac{p-1}{2}p^{n-1} - 1$ where
\[
k_1 \equiv \pm p^{v_1}\pmod{p^{n-1}}, \quad k_2 \equiv \pm p^{v_2}\pmod{p^{n-1}},
\]
and such that there exists $1\le l \leq \frac{p^n - 1}{2}$ with $l\equiv \pm(p^{v_1}-p^{v_2})\pmod{p^{n-1}}$ and 
\[k_1 + \epsilon k_2\equiv \pm l \pmod{p^n},\] 
for some $\epsilon \in \{\pm 1\}$. Then we also have $R(k_1) + \epsilon R(k_2)\equiv \pm l \pmod{p^n}$.
For our choice of $l$, $k_1$ and $k_2$, the equation \eqref{eq:d_l} can be rewritten as 
\begin{equation}\label{eq:d_l-2}
    d_l=2\sum^{p-1}_{i=0} a_{R^i(k_1)}a_{R^i(k_2)}.
\end{equation}
In other words, the pattern by which the indices of the coefficients iterate matches paths in the diagrams associated with $A_{p^{v_1}}(\alpha)$ and $A_{p^{v_2}}(\alpha)$ (as in \eqref{eq:diagram}), starting form ${k_1}$ and ${k_2}$, respectively.  

From \eqref{eq:two_terms}, we see that if $C_m(\alpha^2) = 0$ for every integer $m$ with $1 \leq m \leq \frac{p-1}{2}p^{n-1} - 1$ with $m\equiv \pm (p^{v_1}-p^{v_2})\pmod{p^{n-1}}$, then the coefficients $d_l$ for integers $l$ with $1 \leq l \leq \frac{p^{n} - 1}{2}$ with $l\equiv \pm (p^{v_1}-p^{v_2})\pmod{p^{n-1}}$ must all be equal. Keeping this in mind, we evaluate the coefficients $d_l$. By \Cref{lem:Aw2_cases}, the assumption that $A_{p^{v_1}}(\alpha) = A_{p^{v_2}}(\alpha) = 2$ implies that 
that there are $k_1 \in I_{p^{v_1}}$ and $k_2 \in I_{p^{v_2}}$ and $s_1, s_2 \in \lbrace 1, \ldots p - 1 \rbrace$ such that $k_1+\epsilon k_2\equiv \pm(p^{v_1}-p^{v_2})\pmod{p^{n-1}}$, and (for $j \in \lbrace 1, 2 \rbrace$) that $a_{k_j} = a_{R^i(k_j)} \in \{ \pm 1\}$ for $i \in \lbrace 0, 1, \ldots, s_j-1 \rbrace$ and $a_{R^i(k_j)} = 0$ for $j \in \lbrace 1, 2\rbrace$ and $i \in \lbrace s_j, \ldots, p-1 \rbrace$.
Consider now positive integers $1\le l_1,l_2,l_3 \le \frac{p^n-1}2$ such that $l_1\equiv \pm (k_1+\epsilon k_2) \pmod{p^n}$, $l_2\equiv \pm(R^i(k_1)-\epsilon R^{i-1}(k_2))\pmod{p^n}$, and $l_3\equiv \pm (R^i(k_1)-\epsilon R^{i+1}(k_2))\pmod{p^n}$ for some $i$ for which $R^{i}(k_j)$, $R^{i+1}(k_2)$ and $R^{i-1}(k_2)$ are not $\ast$ for $j \in \lbrace 1, 2 \rbrace$; it is easy to see that the quantities $l_2,l_3$ do not depend on the choice of $i$. From equation \eqref{eq:d_l-2} we see that $d_{l_1},d_{l_2}$ and $d_{l_3}$ are equal to $\pm 2\min\{s_1,s_2\}$, $\pm 2\min\{s_1-1,s_2\}$ and $\pm 2\min\{s_1,s_2-1\}$, which cannot all be equal. Therefore we get a contradiction, and $C_m(\alpha^2)$ cannot be zero for all $m\equiv \pm (p^{v_1}- p^{v_2})\pmod{p^{n-1}}$. This proves (2).
\end{proof}

\begin{proof}[Proof of \Cref{prp:cyclotomic_many_squares}]
It suffices to prove that $t_m$ cannot be expressed as a sum of $m-2$ squares in $\mc O_{K_{p^n}^+}$ for any $m\le n\in\N$. For fixed $n\ge m$, recall that we defined $\omega(k) = \omega(p^n,k) = \zeta_{p^n}^k + \zeta_{p^n}^{-k}$, and we thus may rewrite
\[
t_m = \sum_{k=n-m}^{n-2} \omega(p^k)^2 =\sum_{k=n-m}^{n-2} (\omega(2p^k) + 2).
\]
Let
\[
t_m = \alpha_1^2 + \cdots + \alpha_l^2, \quad \alpha_j = a_{j,0} + \sum_{k=1}^{\frac{p-1}2 p^{n-1}-1} a_{j,k} \omega(k)
\]
be a representation of $t_m$ as a sum of $l$ squares in $\mc O_{K_{p^n}^+}$. Taking the constant coefficient, we obtain
\begin{equation}\label{eq:cyclotomic_sum_constant}
C_0(t_m) = 2(m-1) = \sum_{j=1}^l \sum_{i=0}^{\frac{p^{n-1}-1}{2}} A_i(\alpha_j).
\end{equation}
Meanwhile, for $n-m \le k \le n-2$, taking the $2p^k$-th coefficient gives
\begin{equation}\label{eq:cyclotomic_sum_2pk}
C_{2p^k}(t_m) = 1.
\end{equation}

Since each $A_i(\alpha_j)$ (for $0 \leq i \leq \frac{p^{n-1}-1}{2}$ and $1 \leq j \leq l$) is either $0$ or at least $2$ by \Cref{lem:Aw_minimal_positive_value}, \eqref{eq:cyclotomic_sum_constant} implies that there can be at most $m-1$ pairs $(i, j)$ for which $A_{i}(\alpha_j) \neq 0$.
We will show that for each $n - m \leq k \leq n - 2$ there is some index $j$ for which $A_{p^k}(\alpha_j) \neq 0$; this will then imply that this $j = j(k)$ is unique, that $A_i(\alpha_{j(k)}) = 2$, and that $A_i(\alpha_{j'}) = 0$ for all other values of $(i, j')$.
To this end, observe that it follows from \eqref{eq:cyclotomic_sum_2pk} that $C_{2p^k}(\alpha_j^2)$ is odd for some $j$. Using the congruence \eqref{eq:cyclotomic_mod2}, as well as \eqref{eq:rf} and \eqref{eq:d2}  
for the conversions, we see that $C_{2p^k}(\alpha_j^2)$ is odd if and only if exactly one of $a_{j,p^k}$ and $a_{j,\frac{p-1}4p^{n-1}+p^k}$ is odd. Observing that $A_{p^k}(\alpha_{j})=0$ if and only if $a_{j,l} = 0$ for every $l\equiv \pm p^k \pmod{p^{n-1}}$, this implies that $A_{p^k}(\alpha_{j})$ is nonzero. 
This establishes the claim.

Finally, we claim that for $k_1\ne k_2$, then $j(k_1)\ne j(k_2)$. Suppose to the contrary that $j(k_1)=j(k_2)=j_0$. Then, by \Cref{lem:cross_term_nonvanishing}$(2)$, we have $C_l(\alpha_{j_0}^2) \ne 0$ for some $l \equiv \pm(p^{k_1}-p^{k_2}) \pmod{p^{n-1}}$, and by \Cref{lem:cross_term_nonvanishing}$(1)$ it follows that $C_l(\alpha_j^2) = 0$ for $j\ne j_0$, noting that $A_{p^{k_1}}(\alpha_j)=A_{p^{k_2}}(\alpha_j)=0$. This implies $C_l(t_m) \ne 0$, a contradiction. This establishes the claim. From the claim we conclude that the indexing function
\[
j:\cb{n-m,n-m+1,\ldots,n-2} \to \{1,\ldots,l\}
\]
is injective. This says $l \ge m-1$, and therefore $t_m$ cannot be written as a sum of $m-2$ squares.
\end{proof}

\begin{rmk}
With a similar argument, it can be shown that $P(\mc O_{K_{p^\infty}^+}) = \infty$ also for $p=2$ and $p\equiv 3\pmod{4}$.
\end{rmk}

\section{Open problems}\label{sect:Open}
Let $K$ be an infinite real multiquadratic extension of $\qq$. We conjecture that $P(\mc O_K)=\infty$. It would be tantalising to predict the same behaviour for any infinite totally real algebraic field $K$ with the Northcott property, say, to draw a parallel with the non-existence of universal quadratic forms. Just like for non-existence of universal quadratic forms \cite[Proposition 4.6]{DKMWY}, we also have an example, in $K^+_{p^{\infty}}$, of infinite totally real algebraic fields without the Northcott property but with an infinite Pythagoras number.  

We conclude by asking the following questions.

\begin{enumerate}
    \item How to determine if the Pythagoras number of the ring of integers of an infinite totally real algebraic extension is finite or infinite?
    \item Do all natural numbers appear as Pythagoras numbers for the rings of integers of such fields?
    \item Does there exist an infinite real cyclotomic field whose ring of integers has a finite Pythagoras number?
    \item Does there exist a lower bound for the Pythagoras number of the ring of integers of a totally real number field $K$ with respect to $[K:\Q]$?
    \item Do there exist infinitely many totally real number fields $K$ such that $P(\mc O_K)=3$?
\end{enumerate}
\section{Declarations}
\subsection*{Acknowledgments}
We thank Jakub Krásenský and Martin Widmer for providing helpful input and pointers. We also thank the number theory group at Charles University for useful discussions on this topic and the referees for their thorough reading of the manuscript and numerous constructive suggestions.

The first author would like to thank the Hausdorff Research Institute for Mathematics, Bonn, funded by the DFG (under Germany's Excellence Strategy, EXC-2047/1 – 390685813), for its hospitality during the trimester program ``Definability, decidability, and computability'' while this paper was revised.

\subsection*{Conflict of Interest/Competing Interest}
	None.
\subsection*{Funding statement}
The authors were supported by \emph{Charles University} PRIMUS Research Programmes PRIMUS/24/SCI/010 (Daans, Yatsyna) and PRIMUS/25/SCI/008 (Man), \emph{Charles University} programme UNCE/24/SCI/022 (Yatsyna), \emph{Czech Science Foundation} GAČR, grant 21-00420M (Gajović, Man), Junior Fund grant for postdoctoral positions at \emph{Charles University} (Gajović), MPIM guest postdoctoral fellowship programme (Gajović), and \emph{Research Foundation--Flanders (FWO)}, fellowship 1208225N (Daans).

\bibliographystyle{alpha}
\bibliography{bibliography}

\end{document}